\numberwithin{equation}{section}
\newtheorem{proposition}{Proposition}[section]
\newtheorem{theorem}[proposition]{Theorem}
\newtheorem{lemma}[proposition]{Lemma}
\theoremstyle{definition}
\newtheorem{remark}[proposition]{Remark}
\theoremstyle{remark}
\renewcommand\P{\mathbb{P}}
\newcommand\E{\mathbb{E}}
\newcommand{\w}{\circ}
\renewcommand{\b}{\bullet}
\newcommand{\indic}[1]{\mathbf{1}_{\{#1\}}}
\newcommand\R{\mathbb{R}}
\newcommand{\bea}{\begin{eqnarray}}
\newcommand{\eea}{\end{eqnarray}}
\def\void{}
\def\labelmark{}
\newenvironment{formula}[1]{\def\labelname{#1}
\ifx\void\labelname\def\junk{\begin{displaymath}}
\else\def\junk{\begin{equation}\label{\labelname}}\fi\junk}%
{\ifx\void\labelname\def\junk{\end{displaymath}}
\else\def\junk{\end{equation}}\fi\junk\labelmark\def\labelname{}}
\def\junk{\end{array}\end{displaymath}}
\def\junk{\end{array}\right.\end{equation}}
\def\labelname{}\def\junk{}
\newcommand{\beq}{\begin{formula}}
\newcommand{\eeq}{\end{formula}}
\newcommand{\beqv}{\begin{formula}{}}
\newenvironment{romenumerate}[1][0pt]{
\addtolength{\leftmargini}{#1}\begin{enumerate}
 }{\end{enumerate}}
\newcounter{oldenumi}
{\setcounter{oldenumi}{\value{enumi}}
\begin{romenumerate} \setcounter{enumi}{\value{oldenumi}}}
{\end{romenumerate}}
\xdef\klockan{\the\count1.0\the\count255}
\xdef\klockan{\the\count1.\the\count255}\fi
\newcommand\marginal[1]{\marginpar{\raggedright\parindent=0pt\tiny #1}}
\newcommand\REM[1]{{\raggedright\texttt{[#1]}\par\marginal{XXX}}}
\def\rompar(#1){\textup(#1\textup)}    
\def\xexp(#1){e^{#1}}
\newcommand{\tend}{\longrightarrow}
\newcommand\asto{\overset{\mathrm{a.s.}}{\tend}}
\newcommand{\NN}{\mathbb N}
\newcommand{\ZZ}{\mathbb Z}
\newcommand{\PP}{\mathbb P}
\newcommand{\EE}{\mathbb E}
\newcommand{\RR}{\mathbb R}
\renewcommand{\SS}{\mathbb S}
\newcommand{\se}{\subseteq}
\renewcommand{\S}{\Sigma}
\renewcommand{\a}{\alpha}
\newcounter{CC}
\newcounter{cc}
\newcommand{\be}{\begin{equation}}
\newcommand{\ee}{\end{equation}}
\newcommand{\uq}{\mathbf{Q}}
\newcommand{\peel}{\mathrm{Peel}}
\newcommand{\Ex}{\mathcal{E}}
\newcommand{\Sw}{\mathcal{R}}
\newcommand{\crit}{\mathrm{c}}
\newcommand{\oo}{\infty}
\newcommand{\iid}{\textsc{iid}}
\newcommand{\uihpq}{\textsc{uihpq}}
\begin{document}
\title[Site percolation in random quadrangulations]
{On site percolation in random quadrangulations of the half-plane}

\date{\today}   

\author{Jakob E. Bj\"ornberg}
\address{JEB: Department of Mathematical Sciences, 
Chalmers and Gothenburg University, 
412 96 G\"oteborg, Sweden}
\thanks{The research of \textsc{jeb} is supported by the Knut and
  Alice Wallenberg Foundation.}

\author{Sigurdur \"Orn Stef\'ansson} 
 \address{S\"OS: Division of Mathematics, The Science Institute,
University of Iceland, Dunhaga 3
IS-107 Reykjavik, Iceland.}

\maketitle

\begin {abstract}
We study site percolation on uniform quadrangulations of the
upper half plane.
The main contribution is a method for 
applying Angel's peeling process, in particular
for analyzing
an evolving boundary condition during the peeling.  
Our method lets  us  obtain rigorous and explicit 
upper and lower bounds on
the percolation threshold $p_\crit$, and thus show
in particular that $0.5511\leq p_\crit\leq 0.5581$. 
The method can
be extended to site percolation on other 
half-planar maps with the domain Markov property.
\end {abstract}

\section{Introduction}
\label{intro_sec}

Recent years have seen much 
progress on, and growing interest in, models
of statistical physics defined on random maps embedded on surfaces.
This includes work on 
percolation~\cite{angel:2003,angel:2013,menard-nolin,ray:2013}, 
simple random 
walk~\cite{angel:2014,benjamini-curien,benjamini-schramm,bjo-stef,ggn}
and also for example $O(n)$-loop 
models~\cite{bbg1,bbg2}
(see also the recent~\cite{cur-legall}).
Angel's seminal paper~\cite{angel:2003} 
provided key tools for understanding 
both the local limits of planar maps themselves, 
as well as percolation on these local
limits, by establishing a form of spatial Markovian property.
This property is
encapsulated in the so-called peeling process, which 
allows one to discover the map in a step-wise manner.
The peeling process takes a
particularly simple form for \emph{half}-planar maps, 
where it has been used to analyze not only site-percolation on
triangulations, but also for example edge- 
and face percolation on triangulations and 
quadrangulations~\cite{angel:2013}.  The method
has also been used for edge- and site-percolation on uniform planar 
maps as well as edge-percolation on uniform planar
quadrangulations~\cite{menard-nolin}. 

This paper applies the peeling process to analyze 
site percolation on
uniform infinite half-planar quadrangulations
(\uihpq).  
Recall that site-percolation on a graph is defined by assigning one of
two colours to each of the vertices, here called black and white,
independently for different vertices.  
We denote the probability of colouring a vertex white
by $p$ and we assume this to be the same for all vertices.
One is interested in whether or not the white connected cluster
containing some particular site (in our case a fixed point $0$ on the
boundary) is almost surely finite, or if it is infinite with positive
probability.  
Roughly speaking, one may use the peeling process
to discover (part of)
the map itself alongside the
outer boundary of the percolation cluster.  One keeps track of an
`active' part of the boundary of the cluster, whose size after $n$ steps 
is denoted $S_n$, defined in such a way that the cluster is finite if
and only if $S_n=0$ for some $n$.  

In the well-understood case of site
percolation on random half-planar 
\emph{triangulations}, the process $S_n$ is
a random walk.  Its i.i.d.\ increments  have a distribution which
can be found explicitly as a function of 
the percolation parameter $p$.  By computing the drift
of this random walk one arrives at the critical value $p_\crit$,
which is $1/2$ for uniform triangulations 
(the value of $p_\crit$ is known also for
more general triangulations satisfying the domain Markov 
property~\cite{ray:2013}).  Angel and Curien also obtained certain critical
exponents by using this method~\cite{angel:2013}.

A key aspect of the method is to identify an
appropriate `invariant boundary
condition' which describes the colours of the vertices on the 
boundary on one particular
side (usually the left) of
the evolving percolation cluster. 
In the case of triangulations this boundary condition is 
deterministic (all black) and thus particularly
simple.  For edge-percolation 
on triangulations and quadrangulations,
Angel and Curien used the invariance of the `free'
boundary condition.
The fact that 
the invariant boundary condition is no longer deterministic
makes the analysis slightly different, but 
$S_n$ is still a random walk in this case.

As will be explained in much more detail below, the case of
site percolation on 
quadrangulations presents further  challenges.  Not only is the
invariant boundary condition not deterministic, it also exhibits
complicated dependencies. In fact, we have not been able to
describe it explicitly.  The process $S_n$ is no longer a
Markov chain but needs to be analyzed as a function of the evolving
boundary.   Our results
regarding the percolation threshold $p_\crit$ for site percolation on
uniform quadrangulations are stated in 
Theorem~\ref{main_thm}.  Apart from the results
themselves, we hope that the methods we present in this work
can be useful for future work on site percolation on other random
maps.  

\begin{remark}\label{richier-rk}
Shortly after making this work public 
there appeared a paper by Lo\"ic 
Richier~\cite{richier} 
in which he (amongst many other things) determined
the exact value of the percolation threshold to
be $p_\crit=\tfrac59$, a stronger result than our bounds
given in Proposition~\ref{bounds_prop} 
and~\eqref{best-bounds}
below.  His method does not involve analyzing an
`evoloving boundary condition' $X_n$ as ours do, and 
is technically simpler.  On the other hand, we also
present some results on invariance of the 
percolation threshold for a variety of boundary
conditions, see Proposition~\ref{amenable_prop}.
We emphasise that these works were
carried out independently.
\end{remark}

\subsection{Problem setting and background}

We now recall the definition of the uniform infinite 
quadrangulation of the half-plane (\uihpq). 
We start with an integer $m\geq1$ 
and a $2m$-gon embedded in the sphere $\SS^2$.  We 
\emph{root} this
polygon by singling out an edge and an orientation of that
edge, and we think of the face on the right of the root
edge as the external face or `outside'.
Let $\phi(n,2m)$ denote the number of quadrangulations
of the inside of the polygon that have $n$ internal 
vertices,  viewed up to 
orientation-preserving homeomorphisms of the sphere.
Thus $\phi(n,2m)$ is a finite number.  
(By convention $\phi(0,2)=1$, counting the `quadrangulation' 
consisting of one single edge only.  Also, $\phi(n,m)=0$ for odd $m$
since quadrangulations are bipartite.)
We define the
uniform distribution $\theta_{n,2m}(\cdot)$
by assigning the same probability $1/\phi(n,2m)$
to each such quadrangulation.  

The {\uihpq} is defined as the weak limit
\be\label{weak-lim_eq}
\theta_{\oo,\oo}=\lim_{m\to\oo}\lim_{n\to\oo}\theta_{n,2m}
\ee
in the \emph{local topology}.  This topology is closely
related to the one introduced
by Benjamini and Schramm~\cite{benjamini-schramm},
and is defined by the following metric
on embedded planar graphs.  For any rooted graph $M$
embedded in $\SS^2$ and any $r\geq1$, let $B_r(M)$
denote the embedded graph spanned by vertices
 at graph distance $\leq r$ from the root edge.
For two such graphs $M_1,M_2$ define
\be
d_{\mathrm{loc}}(M_1,M_2)=
\frac{1}{1+\sup\{r\geq1: B_r(M_1)=B_r(M_2)\}},
\ee
where the equaility $B_r(M_1)=B_r(M_2)$ is interpreted 
in the sense of equivalence under deformations
of the sphere, as before.
The local topology is by definition the topology
generated by the metric $d_{\mathrm{loc}}$.
Existence of the limits~\eqref{weak-lim_eq}
in the local topology
goes back to~\cite{angel:2003,angel:2005}
and~\cite{curien-miermont}.
 We let $\uq$ denote a random variable sampled from the 
{\uihpq} measure $\theta_{\oo,\oo}(\cdot)$ and note here
that $\uq$ is almost surely an infinite rooted graph 
embedded in the plane, with an infinite simple boundary.

In analyzing the limit~\eqref{weak-lim_eq}, the following 
combinatorial facts are of 
central importance~\cite{bg:2009}.  First, the 
asymptotics of $\phi(n,2m)$ are as follows:
\be\label{phi-asy}
\begin{split}
\phi(n,2m)&\sim C_{2m}\rho^n n^{-5/2}, \mbox{ as }n\to\oo,\\
C_{2m}&\sim K \a^{2m} m^{1/2}, \mbox{ as }m\to\oo,
\end{split}\ee
where $\rho=12$, $\a=\sqrt{54}$ and $K$ is a constant.
The generating function $\sum_{n\geq0}\phi(n,2m)z^n$
is thus convergent for $|z|\leq1/\rho$, and its value
at $z=1/\rho$ is known explicitly and denoted
\be\label{Z_eq}
Z(2m)=\sum_{n\geq0}\phi(n,2m)\rho^{-n}=
\frac{8^m(3m-4)!}{(m-2)!(2m)!}.
\ee
(For $m=1$ we interpret $Z(2)$ as its limiting value
$4/3$.)

We now give a rough description of the peeling process,
more details are given in Section~\ref{peeling_sec}.
The boundary of $\uq$ may be identified with $\ZZ$
and each edge on the boundary with a pair 
$(i,i+1)$.  By convention we take the root edge
as $(-1,0)$ pointing towards $0$ (thus the quadrangulation
is in the \emph{upper} half plane).  The peeling process proceeds
by picking an edge on the boundary and `discovering'
the (unique) face $f$ on its left.  This face may have 0, 1
or 2 of its remaining 2 vertices on the boundary,
see Figure~\ref{f:peelperc}.  The probabilities of all the 
different possibilities for $f$ may be computed explicitly
using~\eqref{phi-asy}, and are given in terms of the numbers
\begin{equation}\label{qs_eq}
\begin{split}
&q_{2k}=q_{2k+1}=Z(2k+2)\rho^{-1}\a^{-2k},\quad\mbox{for }k\geq 0,\\
&q_{2k_1+1,2k_2+1}=\Big(\frac{\rho}{\a}\Big)^2q_{2k_1}q_{2k_2},
\quad\mbox{for }k_1,k_2\geq 0,\\
&q_{-1}=\Big(\frac{\a}{\rho}\Big)^2=\frac{3}{8}.
\end{split}
\end{equation}
Roughly speaking, $q_{-1}$ is the probability that $f$ has
no further vertices on the boundary, $q_k$ is the probability
that it has one further vertex on the boundary at distance $k$ 
from the peeling edge, and $q_{k_1,k_2}$ is the probability
that it has two further vertices on the boundary 
at distances $k_1$ and $k_2$,
see~\eqref{p1} and~\eqref{1k}.
The face $f$ divides $\uq$ into two parts, one `above' $f$
and one `below'.   We may redefine
the boundary by `forgetting' the lower part.
  It is a consequence
of the domain Markov property  that 
the remaining, upper part 
also has the law of $\uq$.   One may thus
continue to discover the rest of $\uq$ by repeating the
steps above.  

Similarly to the approach pioneered by 
Angel~\cite{angel:2003}, we will couple the peeling
process with the discovery of the boundary of the
percolation cluster.  Before starting the peeling,
we begin by colouring all 
vertices on the boundary $\ZZ$ black, except for the
vertex $0$ which is coloured white.  Each time we discover
a face we will sample (independently of everything else)
the colours black/white of any vertices on $f$
not discovered
in a previous step.  We describe this procedure fully 
in Section~\ref{perc-peel_sec},
but as an example of what may happen, suppose that at
the very first peeling move we choose the root edge 
$(-1,0)$ to peel from, and that the face $f$ discovered
has two of its vertices in the interior of $\uq$, see 
Fig.~\ref{f:mixedpeel}.
\begin{figure} [t]
\centerline{\scalebox{0.7}{\includegraphics{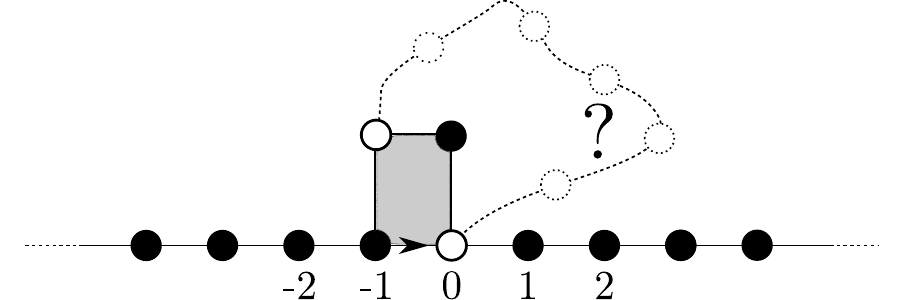}}}
\captionsetup{width=5cm}
\caption{A new white vertex is discovered by exposing the gray
  face. At this stage it is not known whether it belongs to the white
  cluster containing 0. A possible existence of such a connection is
  indicated by dashed lines.} \label{f:mixedpeel}
\end{figure} 
Suppose, furthermore, that these vertices receive
colours black and white, counting counterclockwise 
starting from 0.  At this stage we cannot determine
whether or not the new white vertex
will be part of the white cluster containing 0 or not.
(This differs markedly from the case of triangulations,
where the face $f$ could have at most one new vertex
and this new vertex would have had to be part of the
cluster if it was white.)
We will deal with this by storing, at each step of the
peeling process, information about the
`yet-to-be-decided' white vertices in what we call
a \emph{mixed boundary}.  At step $n$ we denote the 
mixed boundary by $X_n=(X_n(j):j\geq1)$, 
which is a sequence
of colours black and white, indexed by $\NN$.
Understanding the process $(X_n)_{n\geq0}$
is central to our approach.  

\subsection{Main results}

For each sample of $\uq$ and of the 
colours black/white, let $C_0$ denote the 
connected component in the white subgraph containing
the boundary point 0, and let $|C_0|$ denote its
size (number of vertices, say). 
It turns out (as we will show) that there is a number
$p_\crit\in[0,1]$ such that $\PP(|C_0|=\oo)=0$
if $p<p_\crit$ and $\PP(|C_0|=\oo)>0$
if $p>p_\crit$.  Our primary objective has been to pinpoint the
value of the percolation threshold $p_\crit$.
(It is is clear 
that for each $\uq$, the conditional probability
that $|C_0|=\oo$ given $\uq$ is weakly increasing 
in the percolation parameter $p$.
Thus there is a critical value 
$p_\crit(\uq)\in[0,1]$ such that 
this probability is positive if $p>p_\crit(\uq)$
and zero if $p<p_\crit(\uq)$.
Yet the existence of  $p_\crit$ as defined above
requires an argument since there is no obvious
monotonicity when we average also over $\uq$.)

In our attempts to determine the value of 
$p_\crit$, we were led to 
consider percolation on $\uq$ with various boundary
conditions, i.e.\ different ways of assigning black and white colours
to the vertices on the boundary $\ZZ$.  
In our setting, the vertex 0 is always
coloured white, and the vertices $1,2,3,\dotsc$ 
on the boundary to the right of $0$
are always coloured black (although it would 
be straightforward to generalize to other
possibilities for the right side).  The vertices 
$-1,-2,-3,\dotsc$ on the boundary 
to the left of $0$ receive some random
or deterministic colours,  described by 
a vector $\xi\in\S:=\{\b,\w\}^{\{1,2,3,\dotsc\}}$.
Here $\xi(j)\in\{\b,\w\}$ denotes the
colour of $-j$.  We will denote the distribution of $\xi$
by $\nu$.
A-priori, the critical value $p_\crit$ may depend
on the distribution $\nu$, and we write $p_\crit^\nu$
for this critical value.  In case $\xi$ is supported
on the trivial all-black configuration we write 
$\xi\equiv\b$ and $p_\crit^\b$ for the corresponding
percolation threshold.

Next we recall the concept of stochastic ordering.
We order the elements of $\S$ by saying that
$\xi\leq\xi'$ if whenever $\xi(j)=\w$
then also $\xi'(j)=\w$.  We say that an event
(subset) $A\se\S$ is \emph{increasing}
if $\xi\in A$ implies that $\xi'\in A$
whenever $\xi\leq\xi'$.
For two probability measures $\mu,\mu'$
on $\S$ we say that $\mu'$
(stochastically) \emph{dominates} 
$\mu$ if for all increasing
events $A$ we have that $\mu(A)\leq\mu'(A)$.
We will denote the probability measure which assigns
the values $\xi(j)\in\{\b,\w\}$ independently,
with probability $p$ for $\w$, by \iid($p$).

We say that the  random
boundary 
condition $\xi$, or equivalently its distribution $\nu$, 
is \emph{amenable} if there exists 
$p'<1$ such that  $\nu$  is dominated by \iid($p'$).
Note that  $\xi\equiv\b$ is amenable for any $p'<1$,
and that the `free' boundary condition
$\nu=\,$\iid($p$) is amenable whenever $p<1$.
We have the following result on invariance of the
percolation threshold on the boundary condition.

\begin{proposition}\label{amenable_prop}
If $\nu$ is amenable then $p^\nu_\crit=p_\crit^\b$.
\end{proposition}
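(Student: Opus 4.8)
The plan is to show both inequalities $p_\crit^\nu \le p_\crit^\b$ and $p_\crit^\nu \ge p_\crit^\b$ by a coupling argument based on stochastic domination. For the inequality $p_\crit^\nu \le p_\crit^\b$, I would use monotonicity of the percolation event in the boundary condition: since $\xi\equiv\b$ is the minimal element of $\S$ in the stochastic order, the event $\{|C_0|=\oo\}$ is increasing (adding white vertices to the left boundary can only help the cluster grow), and hence $\PP^\b(|C_0|=\oo) \le \PP^\nu(|C_0|=\oo)$ for every $p$. This already gives $p_\crit^\nu \le p_\crit^\b$ once we know such thresholds exist, which is part of the setup. The substantive direction is the reverse inequality $p_\crit^\nu \ge p_\crit^\b$, equivalently: if $p > p_\crit^\nu$ then $p > p_\crit^\b$, or contrapositively, if $\PP^\b(|C_0|=\oo)=0$ then $\PP^\nu(|C_0|=\oo)=0$.

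For this I would exploit amenability. By hypothesis there is $p'<1$ with $\nu$ dominated by \iid($p'$), so by Strassen's theorem we may couple $\xi\sim\nu$ with $\eta\sim\,$\iid($p'$) so that $\xi\le\eta$ pointwise almost surely. By the monotonicity observation above it then suffices to prove the claim for the free boundary condition \iid($p'$) in place of $\nu$; that is, it is enough to show $p_\crit^{\mathrm{iid}(p')} = p_\crit^\b$ for every $p'<1$. The natural way to do this is to compare site percolation on $\uq$ with \iid($p'$) left boundary and parameter $p$ against site percolation with all-black left boundary and a slightly larger parameter $\tilde p$. The point is that a white vertex on the left boundary in the \iid($p'$) model is harmless unless it actually connects to $0$ through the interior, and one can absorb the extra white boundary vertices into a small increase of the bulk parameter: intuitively, conditioned on $\uq$, the left boundary vertices form a thin set, so turning them white with probability $p'$ contributes only a bounded "extra" effect. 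Concretely, I would run the peeling process of Section~\ref{perc-peel_sec} and argue that the evolving active boundary size $S_n$ in the \iid($p'$) model is dominated by the corresponding process in the all-black model at a parameter $\tilde p = \tilde p(p,p')$ with $\tilde p \downarrow p_\crit^\b$ as $p \downarrow p_\crit^\b$; since $\{|C_0|=\oo\}$ corresponds to $S_n$ never hitting $0$, finiteness a.s.\ at $\tilde p < p_\crit^\b$ transfers to finiteness a.s.\ at $p$.

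The main obstacle, I expect, is making the comparison between the \iid($p'$) boundary and the all-black boundary genuinely rigorous rather than heuristic — in particular controlling how white vertices on the left boundary interact with the peeling of the cluster. One clean route avoids the peeling process entirely: given the coupling $\xi\le\eta$ with $\eta\sim\,$\iid($p'$), observe that the white cluster $C_0$ in the $\xi$-model is contained in the white cluster $C_0'$ in the $\eta$-model (same bulk colours, $\eta$ has more white boundary vertices). So it suffices to show that with \iid($p'$) left boundary and bulk parameter $p<p_\crit^\b$, the cluster $C_0'$ is finite a.s. For this, decompose $C_0'$ according to whether it uses the left boundary or not: either it is finite as a "bulk" cluster (which happens a.s.\ at $p<p_\crit^\b$ by a union bound over finite candidate clusters together with the $\xi\equiv\b$ result and a standard exponential-moment/self-avoiding-path estimate, since only the root vertex $0$ starts white on the right and boundary), or it reaches some left-boundary vertex $-j$, in which case the segment $\{-1,\dots,-j\}$ must be entirely white, an event of probability $(p')^{j}$, and independently $-j$ must connect to $0$ through a white path — summing a geometric series in $j$ against the (bounded) connection probabilities gives finiteness. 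Getting the union-bound estimate to close will require the exponential tail of $|C_0|$ in the all-black subcritical regime, which I would either cite or derive from the random-walk description of $S_n$ in the preceding sections; identifying the cleanest available such input is the one place where care is needed.
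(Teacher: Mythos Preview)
Your easy direction $p_\crit^\nu\le p_\crit^\b$ is fine and matches the paper.

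The hard direction has a genuine gap. In your decomposition argument you claim that if the cluster $C_0'$ reaches some left-boundary vertex $-j$ then ``the segment $\{-1,\dots,-j\}$ must be entirely white''. This is false: the white cluster of $0$ can reach $-j$ through the interior of $\uq$ without touching any of $-1,\dots,-(j-1)$; those vertices may perfectly well be black. So the geometric factor $(p')^j$ is not available, and the sum you propose does not control the event. A repaired version would need to bound $\sum_{j\ge1}\PP(\eta(j)=\w)\,\PP(0\leftrightarrow -j\text{ through white interior vertices})$, but the second factor is not obviously summable and nothing in the paper supplies an exponential tail for subcritical connection probabilities on $\uq$; you flagged this input as ``the one place where care is needed'', and indeed it is not available. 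Your first approach (absorbing boundary whiteness into a bulk parameter shift $\tilde p(p,p')$) is, as you say, heuristic, and it is not clear how to make it rigorous either.

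The paper avoids all of this by a dual argument. It passes to the matching lattice $\uq^\boxtimes$ (add both diagonals to every face) and uses that $|C_0|<\infty$ iff there is a \emph{blocking circuit}: a black $\uq^\boxtimes$-path from some $-i<0$ to some $j>0$. For $p<p_\crit^\b$, the cluster of $0$ is a.s.\ finite under the all-black boundary, hence a.s.\ there is such a path with black endpoints; by translation invariance of $\uq$ along the boundary, the same holds around every boundary vertex. Intersecting over all $j<0$ produces, almost surely, infinitely many candidate paths from points $-i$ to the positive boundary that are black except possibly at their left endpoint $-i$. Amenability (domination by \iid$(p')$ with $p'<1$) then guarantees that at least one of these infinitely many left endpoints is black under $\xi$, yielding a blocking circuit and hence $|C_0|<\infty$ a.s. The key inputs are planarity/duality and translation invariance, not any quantitative decay estimate.
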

We will write simply $p_\crit$ for the common critical value.
 Similar results on equality of percolation
thresholds under different boundary conditions have been 
obtained in e.g.~\cite{angel:2013,ray:2013},
usually formulated  
for all-black and {\iid}($p$) boundary conditions.

An important part of our approach is to
analyze the mixed boundary
$(X_n)_{n\geq0}$.  As we will see,  this is
a Markov process, and it has a stationary 
limiting distribution.  We denote a sample from this
limiting distribution by 
$\xi^{(p)} = (\xi^{(p)}(j):j\geq1)$;
we show in Lemma~\ref{iid_lem} that 
$\xi^{(p)}$ is amenable.
Also define for all $k\geq1$
\begin{equation}\label{q-prime}
q'_{2k}=q_{2k} +
\sum_{\substack{k_1+k_2 = 2k\\ k_1,k_2\geq 1,\text{odd}}}
q_{k_1,k_2}.
\end{equation}
In what follows, white vertices are denoted $\w$
and black vertices $\b$.
The following are the results we have obtained.

\begin {theorem}\label{main_thm}
The percolation threshold $p_\crit$ 
satisfies
\begin{equation}
p_\crit\left\{
\begin{array}{ll}
\geq \sup\{p\in[0,1]:\a(p)<0\}\\
\leq \inf\{p\in[0,1]:\a(p)>0\},
\end{array}
\right.
\end{equation}
where
\begin {equation}\label{alpha-eq}
 \alpha(p) = \frac{3}{8} p^2 + \frac{5}{8} p - \frac{1}{2} + \beta(p)
\end {equation}
and 
\begin{equation}
\beta(p)=\Big(p+\frac{1}{3}\Big)
\sum_{k\geq1,\mathrm{odd}}\PP(\xi^{(p)}(k+1)=\w)q_k+
\sum_{k\geq1,\mathrm{even}}\PP(\xi^{(p)}(k+1)=\w)q'_k.
\end{equation}
\end {theorem}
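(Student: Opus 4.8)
The plan is to couple the discovery of the white cluster $C_0$ with the peeling process of Section~\ref{peeling_sec}, and to extract from this coupling a nonnegative integer process $(S_n)_{n\ge0}$ recording the length of the \emph{active} part of the boundary of $C_0$, designed so that $|C_0|<\oo$ if and only if $S_n=0$ for some $n$. Alongside $S_n$ one keeps track of the mixed boundary $X_n\in\S$ introduced in Section~\ref{perc-peel_sec}; the pair $(S_n,X_n)$ then evolves Markovianly, whereas $S_n$ alone does not, and $(X_n)$ on its own is a Markov chain whose stationary law is that of $\xi^{(p)}$. The starting point is to run the exploration from an \emph{initial} boundary condition distributed as $\xi^{(p)}$; this is legitimate and harmless because, for each fixed $p$, the law of $\xi^{(p)}$ is amenable (Lemma~\ref{iid_lem}) and therefore, by Proposition~\ref{amenable_prop}, yields the same percolation threshold $p_\crit$ as the all-black boundary. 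With this choice $X_n\eqd\xi^{(p)}$ for every $n$, so the increments of $(S_n)$ form a stationary sequence, and it suffices to determine the sign of $\E[S_{n+1}-S_n]$.

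The heart of the argument is the explicit computation of this one-step drift. Each peeling step exposes a face $f$ which, with the probabilities recorded in~\eqref{qs_eq} ($q_{-1}=\tfrac38$, $q_k$, $q_{k_1,k_2}$), is of one of the types in Figure~\ref{f:peelperc}, and the vertices of $f$ not seen before are coloured $\w$ independently with probability $p$. I would organise the computation of $\E[S_{n+1}-S_n\mid\F_n]$ according to the type of $f$: the event that $f$ has no further boundary vertex, combined with the three possibilities for the colours of its two interior vertices and the resulting attachment or non-attachment to $C_0$, produces the polynomial part $\tfrac38 p^2+\tfrac58 p-\tfrac12$ of~\eqref{alpha-eq}; the event that $f$ has one further boundary vertex at distance $k$, respectively two at distances $k_1,k_2$, produces terms weighted by $q_k$, respectively $q_{k_1,k_2}$, whose sign is governed by whether the swallowed boundary segment carries a white vertex joined to $C_0$ — an event of the form $\{X_n(k+1)=\w\}$. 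Resumming the two-vertex contributions through the definition~\eqref{q-prime} of $q'_{2k}$ and taking expectations against the stationary law, this should collapse to $\E[S_{n+1}-S_n]=\alpha(p)$, with the prefactor $\bigl(p+\tfrac13\bigr)$ in $\beta(p)$ encoding the colour of the relevant interior vertex of $f$ in the one-vertex case. I expect the main obstacle to be exactly this bookkeeping: simultaneously updating $X_n$ and keeping consistent track of which swallowed regions are attached to $C_0$, so that the stationary average is genuinely $\alpha(p)$ rather than some other functional of $\xi^{(p)}$.

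The sign of $\alpha(p)$ then controls $(S_n)$ via the ergodic theorem for the stationary (and ergodic) sequence of increments. If $\alpha(p)<0$, then on the event that $S_n$ never reaches $0$ one would have $S_n/n\to\alpha(p)<0$ while $S_n\ge0$, a contradiction; hence $S_n$ hits $0$ almost surely, $\PP(|C_0|=\oo)=0$, and so $p\le p_\crit$ by the defining property of the threshold, which gives the lower bound after taking the supremum over such $p$. If $\alpha(p)>0$, then $S_n/n\to\alpha(p)>0$ on the survival event; since with positive probability the first few steps drive $S_n$ to an arbitrarily high level, and from any sufficiently high level the process survives forever with probability close to $1$ (again by the strong law, the increments having finite mean), we get $\PP(|C_0|=\oo)>0$ and hence $p\ge p_\crit$, which gives the upper bound after taking the infimum. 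One point to be careful about here is that the increments of $S_n$ have infinite variance — the tail of $q_{2k}$ is of order $k^{-5/2}$, so a negative jump of size of order $k$ has probability of order $k^{-5/2}$ — so the above must rely on finiteness of the \emph{mean} and the strong law, not on any $L^2$ estimate.
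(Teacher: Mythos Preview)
Your overall strategy matches the paper's: start the exploration from the stationary mixed boundary $\xi^{(p)}$ (legitimate by Lemma~\ref{iid_lem} and Proposition~\ref{amenable_prop}), compute the mean increment of the size of the active white arc, identify it with $\alpha(p)$, and then use an ergodic theorem to convert the sign of $\alpha(p)$ into survival or extinction. The identification of $\alpha(p)$ is correct, though your attribution of the pieces is slightly off: the polynomial $\tfrac38p^2+\tfrac58p-\tfrac12$ is not produced by the ``no further boundary vertex'' case alone, but is $\tfrac38(p+p^2)$ from $\Ex_n=3$, plus $\tfrac14 p$ from $\Ex_n=2$, minus $\E[\Sw_n^+]=\tfrac12$.

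There are two genuine gaps. First, you repeatedly write $S_n$ where you need the \emph{unabsorbed} process $\hat S_n$ of~\eqref{sprocess}: the increments of $S_n$ are \emph{not} stationary once $S_n$ hits $0$, and $\E[S_{n+1}-S_n]\neq\alpha(p)$ in general. One must define $\hat S_n$ with stationary increments $F(X_n,V_n)-\Sw_n^+$, apply the ergodic theorem to $\hat S_n$, and only then relate $\hat S_n$ back to $S_n$.

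Second, and more seriously, your argument for the upper bound when $\alpha(p)>0$ does not go through as stated. You propose to drive $S$ high in the first few steps and then invoke the strong law to survive from a high level. The problem is that the Markov state is the pair $(\hat S_n,X_n)$, not $\hat S_n$ alone: conditioning on a favourable run of the first $n_0$ steps changes the law of $X_{n_0}$, so the increment sequence $(F(X_n,V_n)-\Sw_n^+)_{n\ge n_0}$ is no longer the stationary one to which your ergodic theorem applies. The paper handles this with a different idea. From $\hat S_n\to\infty$ one first gets $\PP(\hat S_n>0\text{ for all }n\ge N)>0$ for some $N$. Then, for each realisation of the peeling moves $\pi_0,\dots,\pi_{N-1}$, one constructs replacement moves $\pi_0',\dots,\pi_{N-1}'$ with three properties: they leave $X_N$ unchanged, they keep $\hat S_n>0$ for all $n\le N$ with $\hat S_N$ at least as large as before, and their probabilities are bounded below by a fixed multiple of the original ones. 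Preserving $X_N$ is the crucial point, since by the Markov property the post-$N$ evolution is then unchanged, and the bounded likelihood ratio converts positive probability of survival after time $N$ into positive probability of survival from time $0$. This replacement construction (done case by case according to whether $\Sw_n^+>0$ and how many vertices are exposed) is the step missing from your sketch.
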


The function $\a(p)$ depends on the distribution
of $\xi^{(p)}$, which
we have not been able to find explicitly. 
However we do
derive  upper and lower bounds on 
the probabilities $\PP(\xi^{(p)}(k+1)=\w)$, resulting in 
bounds on $p_\crit$.  In particular, we obtain
the following:

\begin {proposition}\label{bounds_prop}
 It holds that 
$0.511 \approx \frac{\sqrt{493}-13}{18} \leq p_\crit 
\leq\frac{\sqrt{73}-5}{6}\approx 0.591$.
\end {proposition}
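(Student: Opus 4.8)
The plan is to deduce Proposition~\ref{bounds_prop} from Theorem~\ref{main_thm} by bounding the unknown quantities $\PP(\xi^{(p)}(k+1)=\w)$ above and below, turning the implicit inequalities for $p_\crit$ into explicit ones. The key observation is that $\a(p)$ is built from $\b(p)$, which is a weighted sum of these probabilities with \emph{nonnegative} weights ($q_k$, $q'_k$ and the prefactors $p+\tfrac13$ are all positive for $p\in[0,1]$). Hence $\b(p)$, and therefore $\a(p)$, is monotone in the vector $(\PP(\xi^{(p)}(k+1)=\w))_{k\geq1}$: replacing each of these probabilities by an upper bound increases $\a(p)$, and replacing them by a lower bound decreases it. So if I can find a function $\a_+(p)\geq\a(p)$ and a function $\a_-(p)\leq\a(p)$ that are explicit, then $\sup\{p:\a_+(p)<0\}\leq p_\crit$ and $p_\crit\leq\inf\{p:\a_-(p)>0\}$.

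For the \emph{lower bound} on $p_\crit$, I would use the crudest possible upper bound $\PP(\xi^{(p)}(k+1)=\w)\leq 1$ for every $k$. Then $\b(p)\leq (p+\tfrac13)\sum_{k\geq1,\text{odd}}q_k+\sum_{k\geq1,\text{even}}q'_k$, and since $q'_{2k}=q_{2k}+\sum_{k_1+k_2=2k}q_{k_1,k_2}$ the two sums together collapse to $\sum_{k\geq0}q_k+\sum_{k_1,k_2\geq0}q_{k_1,k_2}$ plus the contribution of the $(p+\tfrac13)$-weighted odd part. Using the explicit values in~\eqref{qs_eq} and~\eqref{Z_eq} (the generating-function identity $\sum_k Z(2k+2)x^k$ has a closed form, and $q_{-1}+\sum q_k+\sum q_{k_1,k_2}=1$ since these are the probabilities of all peeling outcomes), these sums evaluate to explicit constants. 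This yields $\a_+(p)=\tfrac38 p^2+\tfrac58 p-\tfrac12+(\text{linear in }p)$, a quadratic whose larger root is $\tfrac{\sqrt{493}-13}{18}$; one checks $\a_+$ is negative to the left of it.

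For the \emph{upper bound} on $p_\crit$, I need a lower bound on $\PP(\xi^{(p)}(k+1)=\w)$. The trivial bound $\geq 0$ gives $\b(p)\geq0$ and $\a_-(p)=\tfrac38p^2+\tfrac58p-\tfrac12$, whose root is $\tfrac{\sqrt{73}-5}{6}\approx0.591$ --- precisely the claimed upper bound. So in fact both bounds come from the \emph{extreme} trivial estimates $0\leq\PP(\xi^{(p)}(k+1)=\w)\leq1$, and no finer information about the stationary law of the mixed boundary is needed for this particular proposition. What must be checked carefully is: (i) that with these substitutions $\a_+$ and $\a_-$ are genuine upper/lower bounds \emph{as functions of $p$} uniformly on $[0,1]$ (immediate from nonnegativity of all weights), and (ii) that $\a_+,\a_-$ each have a unique sign change in $[0,1]$ at the stated algebraic number, so that the sup/inf in Theorem~\ref{main_thm} are exactly these roots; since both are upward-opening quadratics with a negative value at $p=0$ (namely $-\tfrac12$) and positive value at $p=1$, this is a one-line discriminant-and-endpoint check.

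\textbf{Main obstacle.} The only real work is the summation step for the lower bound: verifying that $(p+\tfrac13)\sum_{\text{odd }k}q_k+\sum_{\text{even }k}q'_k$ equals an explicit linear function of $p$. The cleanest route is \emph{not} to sum the series directly but to use that $q_{-1}+\sum_{k\geq0}q_k+\sum_{k_1,k_2\geq0}q_{k_1,k_2}=1$ (total probability of the peeling step), together with the symmetry $q_{2k}=q_{2k+1}$ and the definition of $q'$, to express $\sum_{\text{even}}q'_k$ and $\sum_{\text{odd}}q_k$ in terms of $1-q_{-1}=\tfrac58$ and of $\sum_{k\geq0}q_{2k}$; this last sum is a hypergeometric-type series in $Z(2k+2)\a^{-2k}$ that has a known closed value. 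Keeping track of which of $\sum_{\text{odd}}q_k$ and $\sum_{\text{odd pairs}}q_{k_1,k_2}$ carries the factor $p+\tfrac13$ and which does not --- i.e.\ correctly reading off the coefficient structure from the definition of $\b(p)$ in Theorem~\ref{main_thm} --- is where an error would most easily creep in, so I would double-check it against the constant term $-\tfrac12$ and the quadratic coefficient $\tfrac38$ already present in $\a(p)$.
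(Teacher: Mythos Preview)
Your argument for the upper bound $p_\crit\leq\tfrac{\sqrt{73}-5}{6}$ is correct and identical to the paper's: $\beta(p)\geq 0$ gives $\alpha(p)\geq \tfrac38 p^2+\tfrac58 p-\tfrac12$, whose unique root in $[0,1]$ is $\tfrac{\sqrt{73}-5}{6}$.

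Your argument for the lower bound, however, does not give the stated value. With the trivial estimate $\PP(\xi^{(p)}(k+1)=\w)\leq 1$ and the sums $\sum_{k\text{ odd}}q_k=\tfrac18$, $\sum_{k\text{ even}}q'_k=\tfrac{1}{18}$ from~\eqref{q-sums}, one gets
\[
\beta(p)\leq \Big(p+\tfrac13\Big)\cdot\tfrac18+\tfrac{1}{18}=\tfrac18 p+\tfrac{7}{72},
\]
hence $\alpha_+(p)=\tfrac38 p^2+\tfrac34 p-\tfrac{29}{72}$, whose root in $[0,1]$ is $\tfrac{-9+\sqrt{6048}/6}{9}=\tfrac{2\sqrt{42}-9}{9}\approx 0.440$, not $\tfrac{\sqrt{493}-13}{18}\approx 0.511$. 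Note in particular that the leading coefficient stays $\tfrac38$, whereas $\tfrac{\sqrt{493}-13}{18}$ is a root of $9p^2+13p-9$, i.e.\ of a quadratic with leading coefficient $\tfrac12$ when the constant term is normalised to $-\tfrac12$. So no amount of careful bookkeeping in the summation can rescue the claim; the input $\leq 1$ is simply too weak.

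The paper obtains the stated lower bound by invoking Lemma~\ref{iid_lem}: the stationary law $\mu^{(p)}$ is stochastically dominated by \iid($p$), so $\PP(\xi^{(p)}(k+1)=\w)\leq p$ for every $k$. Plugging this in gives
\[
\beta(p)\leq \Big(p+\tfrac13\Big)p\cdot\tfrac18+p\cdot\tfrac{1}{18}=\tfrac18 p^2+\tfrac{7}{72}p,
\]
so that $\alpha_+(p)=\tfrac12 p^2+\tfrac{13}{18}p-\tfrac12$, whose positive root is exactly $\tfrac{\sqrt{493}-13}{18}$. Thus the missing ingredient is not a summation identity but the stochastic-domination lemma; the ``main obstacle'' you identified is in fact already handled by the explicit sums in~\eqref{q-sums}.
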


In fact, we present a method for deriving
better bounds for $p_\crit$, which when implemented
numerically on a computer gives the bounds
\be\label{best-bounds}
0.5511 \leq p_\crit\leq 0.5581
\ee
mentioned in the Abstract.
(The bounds presented in
Proposition~\ref{bounds_prop} were chosen as they are simple algebraic
expressions obtained without recourse to numerical methods.)
Our basic approach is to find
probability measures which are stochastically above and below the law
of $\xi^{(p)}$, thus giving upper and lower bounds on $\a(p)$.  
These measures are obtained as the 
stationary distributions of  certain finite state space Markov chains,
meaning that they can be found by solving deterministic 
equations involving $p$.  We expect that the gap
in~\eqref{best-bounds} could be narrowed by further increasing the
size of the state space.
Note that we do not necessarily expect  
 $p_\crit$ to be given by a simple formula, 
indeed our bounds quickly become too complicated
to write down by hand, which is why we have used
numerical methods.


\subsection{Outline}

In Section~\ref{peeling_sec} we properly define the peeling
process, and also provide some basic results about
the critical probability $p_\crit$.
In Section~\ref{mixed_sec} we study the process
$(X_n)_{n\geq0}$ in detail.  We then apply our results
on $(X_n)_{n\geq0}$ in Section~\ref{pc_sec}, where
we prove Theorem~\ref{main_thm} and 
Proposition~\ref{bounds_prop}. 
Furthermore, we explain 
how one may obtain increasingly better upper 
and lower bounds on $p_c$. 

We note here that often the dependency 
on $p$ will be dropped in the notation 
as e.g.~in $X_n$ and $S_n$.

\section{Peeling process}\label{peeling_sec}

The peeling process gives a sequence
$\uq=\uq_0\supset \uq_1\supset\cdots$  of random
infinite quadrangulations with  infinite simple boundary.
At each step $n\geq0$ there is a choice  of an edge $r_n$
on the boundary of $\uq_n$, which we require to
be independent of $\uq_n$ itself,
and given this edge we obtain $\uq_{n+1}$
by a random operation $\peel$ such that
\begin {equation} \uq_{n+1} = \peel(\uq_{n},r_n).
\end {equation} 
The operation $\peel$ is defined as follows.
We start by discovering the unique face $f_n$
of $\uq_n$ adjacent to $r_n$ (the distribution
of $f_n$ will be given shortly).  This face may
be adjacent to  0, 1 or 2 internal vertices,  
the remaining vertices 
are on the boundary of $\uq_n$ and may be to the left
or to the right of $r_n$, as illustrated in 
Figure~\ref{f:peelperc}.  If $f_n$ has vertices on the
boundary then it encloses one or two subquandrangulations
of $\uq_n$, each with a finite, simple boundary. 
By definition, $\uq_{n+1}$ is the infinite
quadrangulation obtained by  removing $f_n$ along with any
such enclosed subquadrangulations.  The edges and vertices
of $f_n$ that then become part of the boundary
of $\uq_{n+1}$ are called \emph{exposed} edges and vertices,
respectively.  The number of exposed edges 
is denoted by $\Ex_n$.
The  edges on the boundary
of $\uq_n$ which are enclosed by $f_n$ are
called \emph{swallowed edges}, and the numbers
of such edges  to the left and right of $r_n$
are denoted by $\Sw^-_n$ and $\Sw^+_n$,
respectively.

Recall the $q$:s  in~\eqref{qs_eq}
and~\eqref{q-prime}.  We note for future reference 
that they satisfy
\begin{equation}\label{q-sums}
\begin{split}
&\sum_{k\geq0}q_{2k+1}=\sum_{k\geq0}q_{2k}=\tfrac18,\quad
\sum_{k_1,k_2\geq0}q_{2k_1+1,2k_2+1}=\tfrac{1}{24}\\
&\sum_{k\geq1}q'_{2k}=\tfrac18-\tfrac19+\tfrac{1}{24}=\tfrac{1}{18}.
\end{split}
\end{equation}
As long as the peeling edge $r_n$ is always
chosen independently of $\uq_n$
we have the following~\cite{angel:2013}:
\begin{list}{$\bullet$}{\leftmargin=1em}
 \item $\uq_n$ is distributed as $\uq$ for all $n\geq 0$ and is
independent of the previous steps.
 \item The couples $(\Ex_n,\Sw_n^-,\Sw_n^+)$ form an
i.i.d.\ sequence, each being  independent of $\uq_n$.
\item If the revealed
face $f_n$ contains vertices in the boundary 
on both sides of $r_n$ then 
$\Sw_n^+ > 0$ and $\Sw_n^- > 0$ and then
necessarily $\Ex_n = 1$.
We have
\begin {equation} \label{p1}
\P(\Ex_n=1,\Sw_n^+=k_1,\Sw_n^-=k_2)=q_{k_1,k_2}
\end {equation} for $k_1,k_2 \geq 1$ both odd.
\item  Otherwise either 
$\Sw_n^- = 0$ or $\Sw_n^+ = 0$  and then we have
\begin {equation} 
\P(\Ex_n=e,\Sw_n^\pm =k) =
\left\{\begin{array}{ll} 
q_{-1}=\frac{3}{8}, & e=3, k=0, \\
q_{k}\indic{k~\text{odd}}, & e=2, k\geq 1, \\
q'_k\indic{k~\text{even}} 
+ \frac{1}{3}q_k\indic{k~\text{odd}}, & 
e =1, k\geq 1. \label{1k}
                             \end{array}\right.
 \end {equation}
It follows from this and~\eqref{q-sums} that
\begin{equation}
\P(\Ex_n=2,\Sw_n^\pm = 0) = \tfrac{1}{8}, \quad
\P(\Ex_n=1,\Sw_n^\pm = 0) =\tfrac{5}{18}. \\ 
 \end {equation}
 \item The expectations
$\E(\Ex_n) = 2$ and $\E(\Sw_n^\pm) = 1/2$.
\end{list}

The enclosed subquadrangulations which are removed to form 
$\uq_{n+1}$ from $\uq_n$ are almost surely finite, and independent
both of each other (if there are two) and of $\uq_{n+1}$.
If the perimiter of an enclosed quadrangulation is $2m$,
then~\eqref{Z_eq} gives the partition function of its 
distribution, but in this paper we shall only use the fact that
it is almost surely finite.

\subsection{Percolation and peeling}
\label{perc-peel_sec}

Recall that the boundary of $\uq$ is identified with $\ZZ$, and that
the directed edge from $-1$ to 0 is taken as root.  
Also recall our class of boundary
conditions:  The vertex 0 is always
coloured white, the vertices $1,2,3,\dotsc$ 
on the boundary to the right of $0$
are always coloured black, and the vertices 
$-1,-2,-3,\dotsc$ on the boundary 
to the left of $0$ receive some random
or deterministic colours,  described by 
a vector $\xi\in\S$ with law $\nu$.

For reasons that will appear later we mainly consider 
$\xi$
which are \emph{admissible}, which we define to mean that
(i) $\xi(1)=\b$, and (ii) for all $k\geq1$, 
if $\xi(k)=\w$ then
$\xi(k+1)=\b$ (i.e., there are no adjacent white vertices).
 We let $\hat\S\se\S$ denote the set of all
admissible boundary conditions.
The canonical admissible boundary
condition is obtained when all vertices to the left of 0 are black, 
written 
$\xi\equiv\b$. 

The remaining vertices, i.e.\ those not on the boundary, 
are coloured independently of each other and of $\xi$,
each being white with
probability $p$ and black with probability $1-p$.

We are interested in knowing for which values of $p$ the 
white cluster $C_0$ percolates,
i.e.\ is infinite with positive probability.  We will investigate
this using the peeling process where we discover the colours at the
same time as we peel.  It remains to define the peeling edges $r_n$.
In the first step we let $r_0=(-1,0)$. 
Assuming that we have
defined $r_{n}=(r_n^L,r_n^R)$ for some $n \geq 0$, 
we reveal the new face $f_n$ and then reveal the colours of 
all exposed vertices on $f_n$. 
There will be a few different 
cases for the next peeling edge $r_{n+1}$. 
The first case we consider is when $\Sw^+_n=0$, that is to say that 
no edges on the boundary to the right of $r_{n}$ are swallowed.
 Starting from the vertex immediately to the left of
$r_n^R$ in the boundary of $\uq_{n+1}$, follow this boundary from
right to left until the first black vertex is encountered.  This black
vertex is denoted $r_{n+1}^L$, the vertex immediately to its right
in the boundary of $\uq_{n+1}$ is denoted $r_{n+1}^R$, and
$r_{n+1}=(r_{n+1}^L,r_{n+1}^R)$ is taken as the new peeling edge.
If, on the other hand, $\Sw_n^+>0$, then 
we denote by $s_n$ the rightmost vertex on the
boundary of $\uq_n$ belonging to the new face $f_n$. 
We then follow the boundary of $\uq_{n+1}$ from right to left starting
at the vertex immediately to the left of
$s_n$, until we discover a black vertex 
which we take to be $r_{n+1}^L$.  The vertex immediately to its
right is taken to be $r_{n+1}^R$.
Examples are given in Figure \ref{f:peelperc}.
Note that the choice of $r_{n+1}$ is always independent of
$\uq_{n+1}$.
\begin{figure} [t]
\centerline{\scalebox{0.45 }{\includegraphics{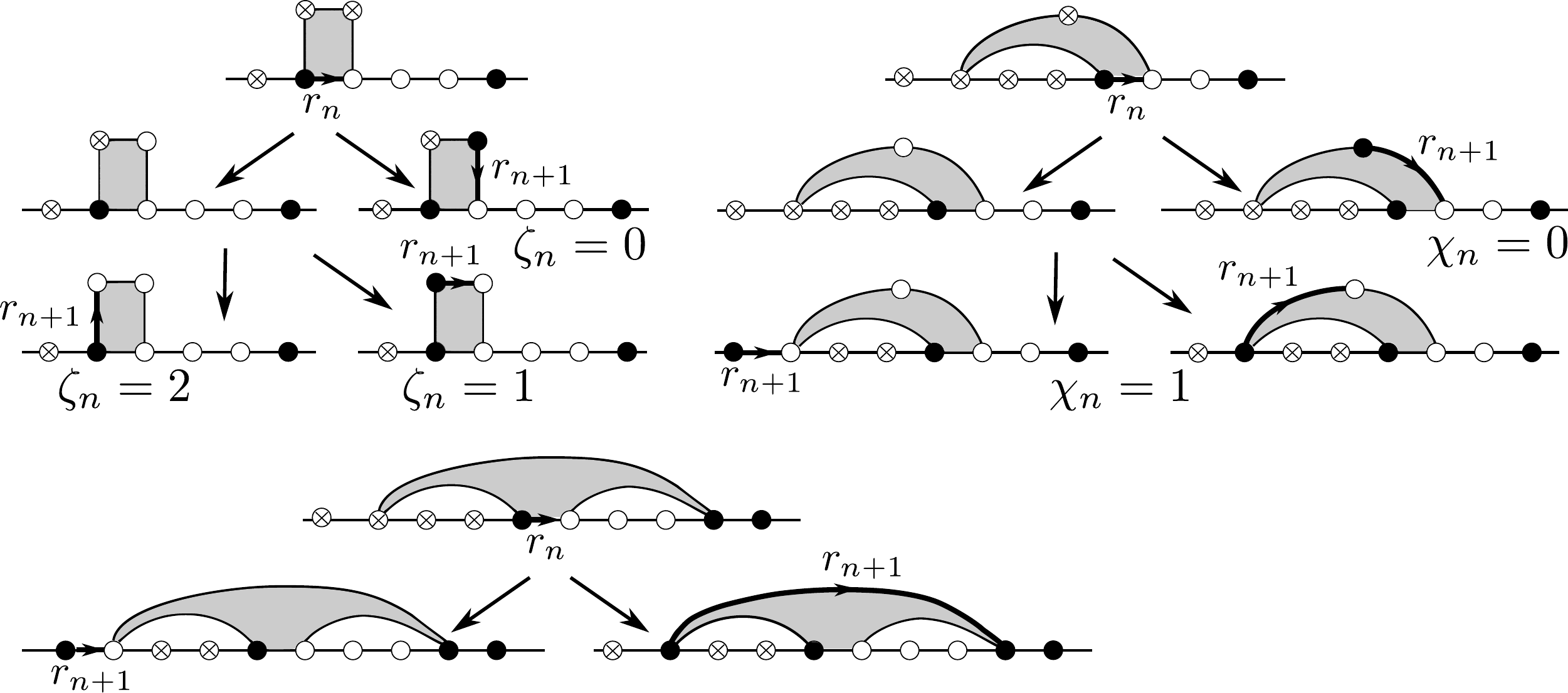}}}
\caption{Examples of the definition of the peeling edge $r_n$,
as well as the
random variables $\zeta_n$ and $\chi_n$ 
(defined in Section~\ref{pf_sec}). 
Unspecified colours are
represented by $\otimes$.} \label{f:peelperc}
\end{figure} 

In each step, $\uq_n$ will have a
\emph{mixed--white--black} boundary condition.
The \emph{white}  part of the boundary is
possibly empty;  if non-empty it leftmost endpoint is marked by 
$r_{n}^R$.   We will denote
the number of white vertices in this white arc 
\emph{which belong to} $C_0$ 
by $S_n$.   Thus $S_0=1$ and if $S_n=0$
then $S_{n'}=0$ for all $n'\geq n$.
The mixed part of the boundary is 
described by a vector $X_n=(X_n(j):j\geq1)\in\S$,
where $X_n(j)$ denotes
the colour of the $j$:th vertex to the left of $r_{n}^R$.
In fact it is easy to see
that if $\xi=X_0$ is admissible then $X_n$ is admissible for 
all $n\geq0$.

Although we have defined this procedure for all $n\geq0$, we are
primarly concerned with the process up to the first time that $S_n=0$
(if this ever happens).  Until this time, all vertices in the \emph{white}
arc contribute to $S_n$, and the state of the mixed boundary in $\uq_n$ is
thus determined by $(X_n, S_n)\in\hat\S\times\NN$.  This
is easily seen to be a Markov chain. 
Note that the process $(X_n)_{n\geq 0}$ is  itself a Markov chain,
but the process $(S_n)_{n\geq0}$ is not;  whereas $X_{n+1}$ only
depends on $X_n$ as well as the next peeling move, to determine
$S_{n+1}$ we must also look at $X_n$ since some of the white vertices
in $X_n$ may become part of the white cluster.

The key observation, which this model shares with other studies of
percolation using the peeling process, is that
$C_0$ must be finite  
if ever $S_n=0$. 
 Note that this holds even if the initial
boundary condition $\xi$ contains infinitely many white vertices,
since only finitely many of them can become connected to 0 during a
finite number of peeling steps.  We omit formal proof of this but
state it as a proposition:
\begin{proposition}\label{Sn-zero_prop}
If $S_n=0$ for some $n$ then $C_0$ is finite.
\end{proposition}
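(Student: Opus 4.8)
The plan is to argue that the white arc, together with the peeling edges, functions as a "frontier" that separates the already-explored part of $\uq$ from the unexplored part, and that every vertex of $C_0$ discovered so far either lies in this white arc or lies strictly inside the already-removed region. First I would set up the bookkeeping: after $n$ peeling steps we have revealed $\uq \setminus \uq_n$ together with a colouring of all its vertices, and the boundary of $\uq_n$ decomposes (reading from $r_n^R$ leftwards and then continuing around) into a white arc of length $S_n$ (the white vertices known to be in $C_0$), followed by the mixed admissible boundary $X_n$ to its left, and the always-black vertices $1,2,3,\dots$ to the right of $r_n^R$. The key structural claim is an invariant maintained for all $n$ up to the first time $S_n = 0$: \emph{every vertex of $C_0$ that has been revealed by step $n$ lies either in the white arc or in the interior of the removed region $\uq \setminus \uq_n$, and no vertex of $C_0$ lies on $\partial\uq_n$ outside the white arc.}

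The proof of the invariant is by induction on $n$. The base case $n=0$ is immediate: $C_0 = \{0\}$ so far, $0 = r_0^R$ is the sole white-arc vertex, and all other boundary vertices are black hence not in $C_0$. For the inductive step I would analyse the peeling move according to the cases in the definition of $r_{n+1}$ and in~\eqref{p1}--\eqref{1k}. When we reveal $f_n$ and colour its exposed vertices, the only way the revealed cluster can grow is through vertices adjacent to the current white arc; the construction of $r_{n+1}$ — walking leftwards from just left of $r_n^R$ (or of $s_n$) until the first black vertex — is precisely designed so that $r_{n+1}^R$ is the new leftmost endpoint of the white arc and every white vertex strictly between $r_{n+1}^R$ and the old white region is now adjacent to $C_0$, hence joins $C_0$, while everything to the left of the black vertex $r_{n+1}^L$ is sealed off behind it. Any subquadrangulations swallowed by $f_n$ are removed into the interior and are finite (as noted after~\eqref{1k}), so any $C_0$-vertices inside them are accounted for by the first clause of the invariant. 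The admissibility of $X_n$ (no two adjacent whites, proved earlier in Section~\ref{perc-peel_sec}) guarantees that the leftward search terminates and that the black vertex $r_{n+1}^L$ genuinely blocks the cluster on that side. Granting the invariant, the conclusion is immediate: if $S_n = 0$ at some step, the white arc is empty, so by the invariant no vertex of $C_0$ lies on $\partial\uq_n$; since $\uq_n$ is connected to the revealed region only through $\partial\uq_n$, no further vertex of $C_0$ can ever be discovered, and $C_0$ is contained in the finite set $\uq \setminus \uq_n$. In particular $C_0$ is finite even if the initial boundary condition $\xi$ had infinitely many white vertices, because those sit on the mixed part $X_n$, which is permanently separated from $C_0$ by the black peeling endpoints.

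The main obstacle, and the reason the authors "omit formal proof", is making the topological separation argument fully rigorous: one must verify that at \emph{every} peeling move the new white arc really does capture \emph{all} newly-connected $C_0$-vertices and that nothing in $X_n$ or in the right-side black ray can sneak into $C_0$ later through some path going around the explored region. This requires carefully tracking, across all the cases $\Sw_n^+ = 0$ versus $\Sw_n^+ > 0$ and $\Ex_n \in \{1,2,3\}$, how the cyclic order of boundary vertices of $\uq_{n+1}$ relates to that of $\uq_n$, and confirming that the "first black vertex" rule never leaves a white $C_0$-vertex stranded on $\partial\uq_{n+1}$ outside the arc. Once that case analysis is done the rest is routine, which is why I would present the invariant as the single substantive step and treat the deduction of finiteness from $S_n = 0$ as a one-line corollary.
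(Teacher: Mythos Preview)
The paper explicitly omits the proof of this proposition (``We omit formal proof of this but state it as a proposition''), so there is no argument to compare against; your sketch is supplying what the authors deliberately left out. Your overall strategy---maintain by induction the invariant that the white arc is exactly the intersection of the revealed cluster with $\partial\uq_n$, then invoke planar separation to conclude finiteness once the arc is empty---is the standard and correct approach for this kind of statement.

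Two imprecisions are worth flagging. First, your invariant mixes the full cluster $C_0$ with the partially revealed one: the clause ``no vertex of $C_0$ lies on $\partial\uq_n$ outside the white arc'' is not something verifiable at step $n$, since a white vertex in $X_n$ may well belong to $C_0$ via a path through the unexplored $\uq_n$. What you can maintain inductively is that the white arc equals $C_0^{(n)}\cap\partial\uq_n$, where $C_0^{(n)}$ denotes the white cluster of $0$ in the subgraph revealed by step $n$. The deduction that $C_0=C_0^{(n)}$ once $S_n=0$ then needs one more line: take a hypothetical white path from $0$ leaving the revealed region, let $u$ be its last vertex before exiting, and observe that $u\in C_0^{(n)}\cap\partial\uq_n=\varnothing$. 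Second, your base case asserts ``all other boundary vertices are black'', which is false for a general admissible $\xi$; the correct point is that admissibility forces $\xi(1)=\b$, so $0$ is isolated in the white boundary subgraph regardless of the colours $\xi(2),\xi(3),\dotsc$ further left. With these fixes the argument goes through.
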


\subsection{Percolation threshold for different boundary conditions}


In this section we show that there is indeed a percolation
threshold $p_\crit^\b$ when the boundary condition
$\xi\equiv\b$ is all-black (as stated in Section~\ref{intro_sec}).
We will also prove Proposition~\ref{amenable_prop}.

The following result is not new but we include a 
proof for completeness.

\begin {proposition}\label{pc_prop}
For each $\nu$ supported on $\hat\S$ we have that
\be\label{pc1}
\mbox{if $p$ is such that } \PP(S_n\to0)=1\mbox{ then }
\PP(|C_0|=\oo)=0 \mbox{ for all }p'\leq p,
\ee
and 
\be\label{pc2}
\mbox{if $p$ is such that } \PP(S_n\to0)<1\mbox{ then }
\PP(|C_0|=\oo)>0 \mbox{ for all }p'\geq p.
\ee
In particular, there is a value
$p_\crit^\nu\in[0,1]$ such that
$\PP(|C_0|=\oo)=0$ if $p<p_\crit^\nu$, and
$\PP(|C_0|=\oo)>0$ if $p>p_\crit^\nu$.
\end {proposition}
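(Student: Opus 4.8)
The plan is to deduce \eqref{pc1} and \eqref{pc2}, and then the existence of $p_\crit^\nu$, from two facts: \textbf{(a)} the percolation probability $\PP(|C_0|=\oo)$ is nondecreasing in $p$; and \textbf{(b)} up to a null event the set $\{S_n\to0\}$ coincides with $\{|C_0|<\oo\}$.

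\emph{Monotonicity (a).} I would fix one sample of $\uq$ and of the left boundary condition $\xi$ (whose laws do not depend on $p$) and attach a single i.i.d.\ Uniform$[0,1]$ field $(U_v)$ to the interior vertices, independent of $\uq$ and $\xi$, declaring an interior vertex $v$ to be white at level $p$ iff $U_v\le p$. The interior white set is then increasing in $p$, and enlarging the white set can only enlarge the white connected component of $0$ (the vertex $0$ and the $p$-independent boundary colours are unchanged); hence $\{|C_0|=\oo\}$ is an increasing event in $p$, and taking conditional expectation given $(\uq,\xi)$ and then averaging shows $p\mapsto\PP(|C_0|=\oo)$ is nondecreasing. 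Equivalently, using (b), $p\mapsto\PP(S_n\to0)$ is nonincreasing.

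\emph{Identification (b).} Since $S_n$ is nonnegative, integer valued and absorbed at $0$, one has $\{S_n\to0\}=\{S_n=0\text{ for some }n\}$, so Proposition~\ref{Sn-zero_prop} gives $\{S_n\to0\}\se\{|C_0|<\oo\}$ up to a null event. For the reverse inclusion I would show that on $\{|C_0|<\oo\}$ only finitely many peeling steps are ``alive'': as long as $S_n\ge1$ the white arc of $\uq_n$ is non-empty and (before the absorption time) consists entirely of vertices of $C_0$, so its leftmost vertex $r_n^R$ lies in $C_0$, and therefore the revealed face $f_n$, being incident to the peeling edge $r_n=(r_n^L,r_n^R)$, is incident to $C_0$; distinct peeling steps reveal distinct faces (each $f_n$ is removed from $\uq_{n+1}$), and in the a.s.\ locally finite {\uihpq} a finite set $C_0$ is incident to only finitely many faces. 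Hence $|C_0|<\oo$ forces $S_n=0$ for some $n$, i.e.\ $\{|C_0|<\oo\}\se\{S_n\to0\}$ up to a null event, completing the identification.

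\emph{Conclusion.} Granting (a) and (b): if at level $p$ we have $\PP(S_n\to0)=1$, then $\PP(|C_0|=\oo)=0$ at level $p$ by (b), hence $\PP(|C_0|=\oo)=0$ at every $p'\le p$ by the monotonicity (a); this is \eqref{pc1}, and \eqref{pc2} is the symmetric statement. Finally (a) makes $\{p\in[0,1]:\PP(|C_0|=\oo)=0\}$ a sub-interval of $[0,1]$ with left endpoint $0$, and setting $p_\crit^\nu$ equal to its supremum gives exactly the stated dichotomy. I expect the only genuinely delicate step to be the converse inclusion in (b): one must check carefully, from the geometric description of the peeling, that $r_n^R\in C_0$ whenever $S_n\ge1$, and that ``distinct steps reveal distinct faces'' together with local finiteness of $\uq$ really does bound the number of alive steps; the rest of the argument is soft.
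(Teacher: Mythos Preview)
Your proof is correct, and the overall architecture (monotonicity via coupling, plus Proposition~\ref{Sn-zero_prop} for one inclusion) matches the paper's. The difference lies in how you establish the reverse inclusion $\{S_n>0\ \forall n\}\subseteq\{|C_0|=\oo\}$. The paper argues probabilistically: it introduces the event $A$ that infinitely many peeling steps have $\Ex_n=3$ with both exposed vertices white, notes $\PP(A)=1$ by the i.i.d.\ structure of the peeling, and observes that on $\{S_n>0\ \forall n\}\cap A$ each such step contributes two fresh vertices to $C_0$, forcing $|C_0|=\oo$. Your argument is instead geometric and deterministic given the map: each revealed face $f_n$ is incident to $r_n^R\in C_0$, the $f_n$ are pairwise distinct faces of $\uq$, and almost-sure local finiteness bounds the number of faces incident to a finite vertex set. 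Your route gives the identification $\{S_n\to0\}=\{|C_0|<\oo\}$ without invoking the i.i.d.\ nature of the peeling increments, so it is a little more robust; the paper's route is more explicit about where the infinitely many cluster vertices come from. Both arguments ultimately rest on the fact that $r_n^R\in C_0$ whenever $S_n\geq1$, which, as you correctly flag, is the point that needs to be checked carefully from the description of the peeling.
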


Here $p_\crit^\nu$ may be identified either as the supremum 
of $p$'s  as in~\eqref{pc1}, or the infimum of
$p$'s  as in~\eqref{pc2}.

\begin {proof}
Write $P^\uq_p$ for the conditional measure $\PP(\cdot\mid\uq)$,
thus $P^\uq_p$ governs the black/white colours only.
 An obvious coupling gives, for each $\uq$, that
\be\label{mon_pc}
\mbox{if } p'\leq p\mbox{ then }
P^\uq_{p'}(|C_0|=\oo)\leq P^\uq_p(|C_0|=\oo).
\ee

To see~\eqref{pc1}, suppose $p$ is such that $\P(S_n\to 0) = 1$.
By Proposition~\ref{Sn-zero_prop} we thus have that
\be
0=\PP(|C_0|=\oo)=\EE[P^\uq_p(|C_0|=\oo)].
\ee
Hence $P^\uq_p(|C_0|=\oo)=0$ almost surely, and so 
by~\eqref{mon_pc} also $P^\uq_{p'}(|C_0|=\oo)=0$ almost surely,
which gives the result.

For~\eqref{pc2}, first let $A$ be the event that, during the
peeling process, there are infinitely many times when the revealed
face $f_n$ has two exposed vertices and they are both white.  Since
the sequence of revealed faces and the colours of the exposed vertices
is i.i.d.\ it follows that $\PP(A)=1$.  It follows that for $p$ as
in~\eqref{pc2} we have
\be
\PP(|C_0|=\oo)\geq \PP(\{S_n>0\;\forall n\geq0\}\cap A)>0.
\ee
This means that $\EE[P^\uq_p(|C_0|=\oo)]>0$ and hence 
(using~\eqref{mon_pc}) for all $p'\geq p$
also $\EE[P^\uq_{p'}(|C_0|=\oo)]>0$, as required.
\end {proof}


Recall that $\xi$, or equivalently its distribution $\nu$, 
is amenable if there exists 
$p'<1$ such that  $\nu$  is dominated by \iid($p'$),
and that 
Proposition~\ref{amenable_prop} claims that
$p^\nu_\crit=p_\crit^\b$ whenever $\nu$ is amenable.

\begin{proof}[Proof of Proposition~\ref{amenable_prop}] 
In this proof we let $\PP$ denote the probability measure under 
which $\xi$ has law $\nu$.  We need to show that, firstly,
if $p>p_\crit^\b$ then $\PP(|C_0|=\oo)>0$, and secondly,
if $p<p_\crit^\b$ then $\PP(|C_0|=\oo)=0$.

The first statement is clear:  for each joint realization of $\uq$ and
the black/white colours such that $|C_0|=\oo$ when the boundary
is all-black,
then also $|C_0|=\oo$ for the boundary condition $\xi$.
We now turn to the second statement.

Let $\uq^\boxtimes$ denote the
\emph{matching lattice} of $\uq$,
that is the graph obtained from $\uq$ by adding to the
edge set both diagonals of each face (this graph is in general not planar).
Then $C_0$  is finite if and only if there are vertices
$-i<0$ and $j>0$ on the boundary $\ZZ$ such that both are black 
(equivalently, 
$\xi(i)=\b$) and there is a path from $-i$ to $j$ in $\uq^\boxtimes$
traversing only black vertices,
see e.g.~\cite[Section~2.2]{kesten} 
or~\cite[Section~11.10]{grimmett}.
We call such a path a 
\emph{blocking  circuit}.  
We need to show that with probability 1 
there is  a blocking
circuit when the boundary condition is $\xi$.

For each $j\in\ZZ$, let $B_j$ denote the event that  there are $i<j$
and $k>j$ such that there is a path in $\uq^\boxtimes$ from $i$ to $k$
which (i) contains no other vertex of $\ZZ$, and (ii) contains only
black vertices, apart from possibly $i$ and $k$.  Since $p<p_\crit^\b$
we have that $\PP(B_0)=1$. By invariance under translation
of $\uq$ with respect to the boundary we thus have that
$\PP(B_j)=1$ for all $j\in\ZZ$.  
Consider the event
$B=\cap_{j<0}B_j$.  We have that $\PP(B)=1$, and on the event $B$
there are infinitely many $i>0$ such
that there is a $\uq^\boxtimes$-path from $-i$ to some $j>0$
which contains no other vertex of $\ZZ$ and consists only of black
vertices, apart from possibly $-i$.  
(To see this, note
that some of the paths whose existence are guaranteed by the $B_j$ may
`merge'.)  Since $\xi$ is amenable, at least one (in fact infinitely
many) of these vertices $-i$ are black.
Hence there is a blocking circuit with probability one.
\end{proof}

The argument for
Proposition~\ref{amenable_prop} applies more generally, e.g.\
it is enough if for any fixed sequence $i_1<i_2<i_3<\dotsb$,
with $\nu$-probability 1 at least one $\xi(i_k)=\b$.
One can also adapt the argument to cases when one
has a general amenable boundary condition also
to the right of 0.

\section{Evolution of the mixed boundary}
\label{mixed_sec}

Let $\S^\ast=\bigcup_{k\geq1}\{\b,\w\}^k$ denote the set of all \emph{finite} 
sequences of $\b$'s and $\w$'s
of length at least 1, and  let 
$\hat\S^\ast$ denote the subset of $\S^\ast$
consisting of all sequences such that the
first bit  is $\b$ and such that 
there are no adjacent $\w$'s.  
Clearly $\S^\ast$ and
$\hat\S^\ast$ are countable.
We endow $\S$ with the
product topology and $\hat\S$ with the subspace topology.
Note that we may define a metric $d_{\S}$ on 
$\S$ which generates its topology by e.g.
\begin {equation}
 d_{\S}(x,y) = \left(\sup\{k~:~[x]_k = [y]_k\}+1\right)^{-1}
\end {equation}
where $[x]_k\in\S^\ast$ denotes the vector consisting of the first $k$
entries in $x$.
Since $\hat\S$ is closed in $\S$ and $\S$ is compact, 
it follows that also  $\hat\S$ is compact.  

We now turn to investigating existence and uniqueness
of invariant distributions for the 
process $(X_n)_{n\geq0}$.  We begin by noting the following 
immediate consequence of the Stone--Weierstrass theorem:

\begin {lemma}\label{l:dense}
 Let $C(\hat\S,\mathbb{R})$ be the set of continuous real valued functions
 on $\hat\S$ equipped with the uniform topology. 
Let $C^\ast(\hat\S,\mathbb{R})$ be
 the subalgebra of functions which only depend on finitely many
 coordinates. Then $C^\ast(\hat\S,\mathbb{R})$ is dense in 
$C(\hat\S,\mathbb{R})$.
\end {lemma}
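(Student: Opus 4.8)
The plan is to recognize this as a direct application of the Stone--Weierstrass theorem, so the real work is checking the four hypotheses of that theorem for the subalgebra $C^\ast(\hat\S,\RR)\se C(\hat\S,\RR)$ on the compact metric space $\hat\S$. First I would note that $\hat\S$ is compact: we established above that $\hat\S$ is closed in the compact space $\S=\{\b,\w\}^{\{1,2,\dots\}}$, hence compact. Then $C(\hat\S,\RR)$ is a Banach algebra under the uniform norm, and Stone--Weierstrass says a subalgebra is dense provided it (i) is indeed a subalgebra (closed under addition, scalar multiplication and pointwise products), (ii) contains the constant functions, and (iii) separates points of $\hat\S$.

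For (i): if $f$ depends only on coordinates in a finite set $F$ and $g$ only on coordinates in a finite set $G$, then $f+g$, $\lambda f$ and $fg$ all depend only on coordinates in $F\cup G$, which is still finite; so $C^\ast(\hat\S,\RR)$ is closed under the algebra operations, and each such $f$ is continuous because the coordinate projections $x\mapsto x(j)$ are continuous in the product topology (equivalently, Lipschitz with respect to $d_\S$) and $\{\b,\w\}^F$ is finite and discrete. Constant functions depend on no coordinates, giving (ii). For (iii), point separation: if $x\neq y$ in $\hat\S$ then $x(j)\neq y(j)$ for some $j\geq1$, and the single-coordinate function $h(z)=\indic{z(j)=\w}$ lies in $C^\ast(\hat\S,\RR)$ and takes different values at $x$ and $y$.

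With these verified, Stone--Weierstrass (real form, on a compact Hausdorff space) immediately gives that the uniform closure of $C^\ast(\hat\S,\RR)$ is all of $C(\hat\S,\RR)$, which is the claim. I do not expect any genuine obstacle here; the statement is explicitly flagged in the excerpt as an ``immediate consequence'' of Stone--Weierstrass, so the proof is essentially a citation plus the bookkeeping above. The only mild subtlety worth a sentence is making sure continuity of the finite-coordinate functions is justified (via continuity of the projections in the product topology), and that $\hat\S$ is compact Hausdorff so that the classical form of the theorem applies; both are routine given what has already been set up. I would write the proof in three or four sentences, citing a standard reference for Stone--Weierstrass.
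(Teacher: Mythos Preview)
Your proposal is correct and matches the paper's approach exactly: the paper simply states the lemma as an immediate consequence of the Stone--Weierstrass theorem without further details, and your verification of the hypotheses (compactness of $\hat\S$, subalgebra structure, constants, point separation via coordinate indicators) is precisely the routine bookkeeping that justifies this citation.
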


Next, recall that a Markov process $(Y_n)_{n\geq0}$ on 
$\hat\S$ is \emph{Feller} if
for any $n\geq 0$ and any bounded continuous
 function $g: \hat\S\rightarrow \R$, the 
function $\xi\mapsto\E_\xi(g(Y_n))$ is
 continuous in $\xi$ (where $\E_\xi$ is the 
expected value given that
 $Y_0=\xi$).

\begin {lemma}
 The process  $(X_n)_{n\geq 0}$ is Feller.
\end {lemma}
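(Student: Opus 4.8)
The plan is to verify the Feller property directly from the description of one peeling step, then extend to all $n$ by composition. First I would reduce to the case $n=1$: since the composition of Feller transitions is Feller (the map $\xi\mapsto\E_\xi(g(X_1))$ is continuous and bounded whenever $g$ is, so applying this observation iteratively handles general $n$), it suffices to show that for every bounded continuous $g:\hat\S\to\R$ the function $\xi\mapsto\E_\xi(g(X_1))$ is continuous on $\hat\S$. By Lemma~\ref{l:dense} it is in fact enough to treat $g\in C^\ast(\hat\S,\R)$, i.e.\ $g$ depending only on the first $N$ coordinates $[X_1]_N$ for some fixed $N$.

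Next I would make explicit the dependence of $X_1$ on $\xi=X_0$ and the (independent) peeling data $(\Ex_0,\Sw_0^-,\Sw_0^+)$ together with the freshly sampled colours of the exposed vertices. The point is that the first $N$ coordinates of $X_1$ are determined by the peeling data and only finitely many coordinates of $\xi$ — more precisely, to read off $[X_1]_N$ one never needs to inspect $\xi(j)$ for $j$ larger than some deterministic bound depending on $N$ and on the realized values of $\Ex_0,\Sw_0^-$ (the new peeling edge $r_1^R$ is located by scanning leftward past the swallowed edges until the first black vertex, and then $X_1$ is the boundary to the left of that; admissibility of $\xi$ guarantees that a black vertex is found after at most one step of scanning beyond the swallowed region). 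So for each fixed value of the peeling data there is an integer $L=L(\Ex_0,\Sw_0^-,N)$ such that $[X_1]_N$ is a function only of that data and of $[\xi]_L$. Summing over the countably many values of the peeling data, weighted by their (fixed) probabilities $q_k$, $q_{k_1,k_2}$, $q_{-1}$, and over the finitely many colourings of exposed vertices, expresses $\E_\xi(g(X_1))$ as a convergent series whose terms are each continuous in $\xi$ (indeed locally constant, since they depend on $[\xi]_L$ only).

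Continuity of the sum then follows from uniform convergence of the series, which holds because $g$ is bounded and $\sum_k q_k<\infty$, $\sum_{k_1,k_2}q_{k_1,k_2}<\infty$ by~\eqref{q-sums}: the tail of the series over peeling moves with large $\Sw_0^\pm$ is bounded by $\|g\|_\infty$ times the tail of these convergent sums, uniformly in $\xi$. Hence $\xi\mapsto\E_\xi(g(X_1))$ is a uniform limit of continuous functions, so continuous, and the Feller property for $n=1$ follows; the general case follows by the composition remark above.

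The main obstacle, I expect, is the bookkeeping in the second step: one must check carefully that $[X_1]_N$ really does depend on only boundedly many coordinates of $\xi$ for each fixed peeling outcome. This requires tracking how the peeling edge $r_1$ is relocated (the two cases $\Sw_0^+=0$ and $\Sw_0^+>0$ in Section~\ref{perc-peel_sec}) and using admissibility of $\xi$ to control how far left one scans before hitting a black vertex — a single step beyond the swallowed region suffices since $\xi$ has no adjacent whites. Once this finite-dependence is pinned down, the continuity and the uniform-convergence argument are routine given~\eqref{q-sums}.
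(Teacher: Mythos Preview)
Your argument is correct and uses essentially the same ingredients as the paper's proof—Lemma~\ref{l:dense} to reduce to finitely-dependent test functions, together with the observation that the first $N$ coordinates of $X_n$ depend on only a bounded initial segment of $\xi$ once the left-swallowing is controlled—but the organisation is genuinely different. The paper treats a general $n$ directly: it introduces the truncation event $A_n(k)=\{\sum_{i=0}^n\Sw_i^-\leq k\}$, chooses $k$ so that $\P(A_n(k)^c)$ is small, and argues that on $A_n(k)$ the value $g_j(X_n)$ (with $g_j$ depending on the first $j$ coordinates) is unchanged if $\xi$ and $\xi'$ agree on their first $j+k$ (or rather $j+k+O(n)$) coordinates. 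You instead reduce to the one-step case via the standard composition-of-Feller-kernels observation, and then write $\E_\xi(g(X_1))$ as a countable sum over peeling outcomes of locally constant functions of $\xi$, appealing to uniform convergence via $\sum q_k<\infty$ to pass continuity to the limit.

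What each buys: your reduction to $n=1$ is cleaner and avoids tracking cumulative swallowing over many steps; it also makes explicit the role of admissibility in bounding how far the scan can go (the paper uses this implicitly but does not spell it out). The paper's direct approach, on the other hand, avoids the series bookkeeping and gives a quantitative modulus of continuity in one stroke. Your anticipated ``main obstacle''—pinning down that $[X_1]_N$ depends on only $[\xi]_{L(\Ex_0,\Sw_0^-,N)}$—is exactly the same finite-dependence fact the paper uses, and your sketch of why admissibility limits the scan to one extra step is correct.
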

\begin {proof}
Fix $\xi\in \hat\S$, 
 $n\geq0$ and a bounded continuous $g:\hat\S\to\RR$.
 Let $\epsilon > 0$.  We will show that there exists
$\delta > 0$ such that
 \begin {equation}
  |\E_{\xi}(g(X_n))-\E_{\xi'}(g(X_n))| < \epsilon
 \end {equation}
for every $\xi'\in\hat\S$ such that 
$d_{\S}(\xi,\xi') < \delta$.  Let
\begin {equation}
A_n(k) = \left\{\sum_{i=0}^n \Sw_i^- \leq k\right\}
\end {equation}
i.e.~$A_n(k)$ is the event that we swallow no more than $k$ edges 
on the left in
the first $n+1$ steps.  Using that $g$ is bounded, let $C>0$ be a
constant such that $\sup g < C$. Since the $\Sw_i^-$ are a.s.~finite
we may choose $k$ large enough such that $\P(A_n^c(k)) <
\epsilon/(4C)$. Since $g$ is continuous,  by Lemma~\ref{l:dense}
one may choose $j$ large enough such that there is a function 
$g_j \in C^\ast(\S,\mathbb{R})$ 
which depends only on the first $j$ coordinates
obeying $\sup|g-g_j| < \epsilon/4$. Then
\begin {align}
   |\E_{\xi}(g(X_n))-\E_{\xi'}(g(X_n))| &\leq
   |\E_{\xi}(g(X_n))\indic{A_n(k)})-\E_{\xi'}(g(X_n)\indic{A_n(k)})|
   \nonumber \\ &\quad+
   |\E_{\xi}(g(X_n)\indic{A^c_n(k)})-\E_{\xi'}(g(X_n)\indic{A^c_n(k)})|
   \nonumber \\ &\leq
   |\E_{\xi}(g_j(X_n)\indic{A_n(k)})-\E_{\xi'}(g_j(X_n)\indic{A_n(k)})|
   \nonumber \\ &\quad+ 2\sup|g-g_j|+ 2C \P(A^c_n(k)) \nonumber \\ &<
   |\E_{\xi}(g_j(X_n)\indic{A_n(k)})-\E_{\xi'}(g_j(X_n)\indic{A_n(k)})|
   \nonumber \\ &\quad+ \epsilon.
\end {align}
Now choose $\delta < 1/(1+j+k)$. Then, if 
$d_{\S}(\xi,\xi') < \delta$ it
holds that $[\xi]_{j+k}=[\xi']_{j+k}$. Since $g_j$ depends only on the
first $j$ coordinates it thus holds that
$\E_{\xi}(g_j(X_n)\indic{A_n(k)})=\E_{\xi'}(g_j(X_n)\indic{A_n(k)})$
and thus
\begin {align}
   |\E_{\xi}(g(X_n))-\E_{\xi'}(g(X_n))| < \epsilon.
\end {align}
\end {proof}
\begin {proposition}\label{kryl_bog}
There is a probability  measure $\mu$ on $\hat\S$
which is invariant for the process $(X_n)_{n\geq0}$.
\end {proposition}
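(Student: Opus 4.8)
The plan is to invoke the Krylov--Bogolyubov theorem for Feller processes on a compact metric space. We have just shown that $\hat\S$ is compact (being closed in the compact space $\S$) and that $(X_n)_{n\geq0}$ is Feller. These are precisely the hypotheses under which Krylov--Bogolyubov guarantees the existence of an invariant probability measure, so the proof will be an application of that theorem together with a standard averaging argument.

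Concretely, I would fix an arbitrary starting point, say $\xi\equiv\b\in\hat\S$, and let $\pi_n$ denote the law of $X_n$ under $\PP_\xi$. Form the Ces\`aro averages
\begin{equation}
\mu_N = \frac1N\sum_{n=0}^{N-1}\pi_n,
\end{equation}
which are probability measures on the compact metrizable space $\hat\S$. By compactness (Prokhorov / Banach--Alaoglu applied to the space of probability measures, which is weak-$*$ compact when the underlying space is compact metric), the sequence $(\mu_N)_{N\geq1}$ has a subsequential weak limit $\mu$. The remaining step is to check that $\mu$ is invariant: if $P$ denotes the transition operator of the chain, then for any $g\in C(\hat\S,\R)$ the Feller property says $Pg$ is again continuous, so $\mu(Pg)=\lim_k \mu_{N_k}(Pg)$ along the convergent subsequence; and a telescoping computation gives $\mu_{N}(Pg)-\mu_N(g)=\frac1N\bigl(\pi_N(g)-\pi_0(g)\bigr)\to0$ since $g$ is bounded. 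Hence $\mu(Pg)=\mu(g)$ for all continuous $g$, which by density (or directly, since continuous functions determine a measure on a compact metric space) means $\mu$ is invariant.

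There is no real obstacle here: both nontrivial ingredients, compactness of $\hat\S$ and the Feller property of $(X_n)_{n\geq0}$, have already been established in the excerpt, so the only thing to be careful about is citing or spelling out the Krylov--Bogolyubov argument correctly — in particular verifying that weak convergence of $\mu_{N_k}$ plus continuity of $Pg$ lets one pass to the limit, and that the boundary terms in the telescoping sum vanish. If one prefers to avoid appealing to the theorem by name, the four lines sketched above constitute a self-contained proof; I would likely present it that way for completeness, perhaps also remarking that uniqueness of $\mu$ is not claimed at this stage (it will be addressed separately when the stationary distribution $\xi^{(p)}$ is analyzed).
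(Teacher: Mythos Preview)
Your proposal is correct and matches the paper's approach exactly: the paper's proof consists of a single sentence invoking the Krylov--Bogolyubov theorem, citing compactness of $\hat\S$ and the Feller property of $(X_n)_{n\geq0}$ as the two hypotheses. Your spelled-out Ces\`aro averaging argument is simply the standard proof of that theorem, so you have in fact given more detail than the paper does.
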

\begin {proof}
This follows from the Krylov--Bogolyubov 
Theorem~\cite[Corollary~4.18]{hairer}
since $\hat\S$ is compact and
$(X_n)_{n\geq 0}$ is Feller.
\end {proof}

Do we expect the invariant distribution $\mu$ to 
be unique?  Imagine placing at time 0
a `flag' on the edge immediately to the left of
$-1$.   At
time $n$ the flag will have moved away from the peeling edge or been
swallowed.  Each time it is swallowed we reset it immediately to 
the left of $-1$.  If the position of the flag
is `recurrent enough' then  $X_n$ will always retain 
information about the initial condition $\xi$, and hence in this
case the distribution $\mu$ cannot be unique.
We make this intuitive sketch more precise now.

As mentioned, at time $n=0$ we mark the edge $(-2,-1)$
just left of $r_0$ by a `flag'.  During the peeling, the 
relative position of the flagged edge with respect to the 
peeling edge $r_n$ will change:  the distance increases
when we `input' into $X_n$, and decreases when we swallow
to the left.  Whenever we swallow the flagged edge we reset
the flag on the edge just to the left of the peeling edge
$r_n$.  We denote the distance between $r_n$ and the
flagged edge by $W_n$.  Thus $W_0=1$ and $W_n\geq1$
for all $n\geq0$.  Let $X^\ast_n\in\hat\S^\ast$ denote the
vector of colours black/white of the vertices between $r_n$ 
and the flagged edge.  So the length $|X^\ast_n|$ of 
$X^\ast_n$ is precisely $W_n$.

Note that the process $(X_n^\ast)_{n\geq0}$ does not depend on the
initial state $\xi\in\hat\S$ of $(X_n)_{n\geq0}$. 
In fact,  the evolution of
$X_n^\ast$ does not depend on $X_n(k)$ for any $k>W_n$.
Thus $(X_n^\ast)_{n\geq0}$ is a Markov 
chain on the countable state space $\hat\S^\ast$,
and it is not hard to see that it is irreducible and aperiodic.
Hence it is either transient, null-recurrent, or 
positive-recurrent, depending on $p$.  
By a slight abuse of terminology, we will refer 
to these three cases
as the \emph{transient}, \emph{null-recurrent}, and 
\emph{positive-recurrent} cases, respectively,
also when referring to the chain 
$(X_n)_{n\geq0}$ itself.

For $\xi^\ast\in\hat\S^\ast$, with length $\ell=|\xi^\ast|$,
and $\xi\in\hat\S$, define the \emph{concatenation} of 
$\xi^\ast$ and $\xi$ as the element $\bar\xi$ of $\S$
given by
\[
\bar\xi(j)=\left\{\begin{array}{ll}
\xi^\ast(j), & \mbox{if } j\leq\ell,\\
\xi(j-\ell), & \mbox{if } j>\ell.
\end{array}\right.
\]
Consider now the positive-recurrent case.
Then standard Markov
chain theory~\cite{norris}
implies that $(X_n^\ast)_{n\geq0}$  has a 
unique invariant distribution $\mu^\ast$ supported
on $\hat\S^\ast$.  
Let $\xi^\ast\in\hat\S^\ast$ denote a random variable
with distribution $\mu^\ast$.
One may
obtain many distributions $\mu$ which are invariant
for the `whole' process $(X_n)_{n\geq0}$ by 
concatenating  $\xi^\ast$ with some
$\xi\in\hat\S$.  
Let $\mu^{(p)}$ be the probability measure on $\hat\S$
which is given as the distribution of $\xi^\ast$
concatenated with $\xi\equiv\b$.  Then $\mu^{(p)}$
 is clearly an invariant measure for 
$(X_n)_{n\geq0}$.
Equivalently, $\mu^{(p)}$ is the limiting distribution of the
process $(X_n)_{n\geq0}$ starting with $X_0\equiv\b$.

We now turn to the transient and null-recurrent
cases.  For any probability measure $\nu$ on
$\hat\S$, let $[\nu]_k$ denote the law of
$[\xi]_k\in\{\b,\w\}^k$ when $\xi$ has distribution $\nu$.  We define
the total-variation distance between two probability measures
$\mu$ and $\nu$ on $\hat\S$ by 
\be
\|\mu-\nu\|_{\mathrm{TV}}=\sup_A |\mu(A)-\nu(A)|
\ee
where the supremum is taken over all measurable sets $A$.  
\begin{proposition}\label{mp-unique_prop}
Let $\xi\in\hat\S$ be random or deterministic and
let $(X_n)_{n\geq0}$ denote the chain started in 
$\xi$.  Let $\mu$ be an invariant distribution
as indentified in Proposition~\ref{kryl_bog}.
In the transient and null-recurrent cases,
we have for each $k\geq1$ that
\begin{equation}\label{tv_eq}
\|[X_n]_k-[\mu]_k\|_{\mathrm{TV}}\to0\quad
\mbox{as }n\to\oo.
\end{equation}
In particular, $\mu$ is unique and may be obtained
as the limiting distribution starting in any state.
\end{proposition}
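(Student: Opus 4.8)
The plan is to compare an arbitrarily started chain with a stationary one by running them against a common source of peeling randomness and tracking the flag process $(X_n^\ast)_{n\ge0}$ on $\hat\S^\ast$, together with the flag distance $W_n=|X_n^\ast|$. The engine of the argument is the escape statement that, in the transient and null-recurrent cases,
\[
\PP(W_n\le k)\longrightarrow 0\qquad\text{as }n\to\oo,\text{ for every fixed }k\ge1.
\]
Granting this, recall that by construction $X_n^\ast(j)=X_n(j)$ for $1\le j\le W_n$, so on $\{W_n>k\}$ the truncation $[X_n]_k$ equals $[X_n^\ast]_k$, a quantity that is a function of the peeling data alone and therefore does not depend on the initial state $\xi\in\hat\S$. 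Now fix an invariant measure $\mu$ as in Proposition~\ref{kryl_bog}, and couple the chain $X^\xi$ started from an arbitrary (random or deterministic) $\xi$ with the chain $X^\mu$ started from $\mu$ so that both use the same peeling data; then $(X_n^\ast)$ and $(W_n)$, hence the event $\{W_n>k\}$ and the word $[X_n^\ast]_k$, are literally the same for the two chains. Consequently, for any $A\subseteq\{\b,\w\}^k$ the $\{W_n>k\}$-contributions cancel, and
\[
\bigl|\PP([X_n^\xi]_k\in A)-\PP([X_n^\mu]_k\in A)\bigr|
=\bigl|\PP([X_n^\xi]_k\in A,\,W_n\le k)-\PP([X_n^\mu]_k\in A,\,W_n\le k)\bigr|
\le\PP(W_n\le k).
\]
Taking the supremum over $A$ gives $\|[X_n^\xi]_k-[X_n^\mu]_k\|_{\mathrm{TV}}\le\PP(W_n\le k)$, which tends to $0$; since $\mu$ is invariant we have $[X_n^\mu]_k=[\mu]_k$ for all $n$, and \eqref{tv_eq} follows.

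It remains to prove the escape statement. For each $\ell\ge1$ the set $\{\xi^\ast\in\hat\S^\ast:|\xi^\ast|=\ell\}$ is finite (words of length $\ell$ with no adjacent $\w$'s), so it suffices to show that $\PP(X_n^\ast=\xi^\ast)\to0$ for every individual $\xi^\ast\in\hat\S^\ast$ and then sum over the finitely many states of length $\le k$. Since $(X_n^\ast)$ is irreducible, in the transient case $\sum_n\PP(X_n^\ast=\xi^\ast)<\oo$ so the summands go to $0$; in the irreducible aperiodic null-recurrent case the convergence $\PP(X_n^\ast=\xi^\ast)\to0$ is the standard limit theorem for null-recurrent chains, which I would simply quote from~\cite{norris}. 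Here one uses that $(X_n^\ast)$ was observed to be irreducible and aperiodic, and that its law does not depend on $\xi$.

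Finally, uniqueness of $\mu$ and the limiting-distribution statement are immediate corollaries. If $\mu,\mu'$ are both invariant, apply the estimate of the first paragraph with $\xi$ distributed as $\mu'$; then $[X_n^\xi]_k=[\mu']_k$ for all $n$, so $\|[\mu']_k-[\mu]_k\|_{\mathrm{TV}}=0$ for every $k\ge1$, and since the Borel $\sigma$-algebra on $\hat\S$ is generated by cylinder sets this forces $\mu=\mu'$. Likewise \eqref{tv_eq} says that from any starting state all finite-dimensional marginals of $X_n$ converge to those of $\mu$; as functions of finitely many coordinates are dense in $C(\hat\S,\R)$ by Lemma~\ref{l:dense}, this is convergence in distribution to $\mu$. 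The step I expect to require the most care is the decoupling used throughout, namely verifying that the flag process, the distance $W_n$, and the truncated word $[X_n^\ast]_k$ really are measurable with respect to the peeling data only --- this is exactly the content of the remark, recorded above, that the evolution of $X_n^\ast$ does not depend on $X_n(k)$ for $k>W_n$, so that the reset rule for the flag never reads coordinates of the initial condition.
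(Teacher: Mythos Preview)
Your proof is correct and follows essentially the same approach as the paper: couple two copies of the chain via common peeling moves, use that $(X_n^\ast)$ and $W_n$ are functions of the peeling data alone so that the first $W_n$ coordinates agree, bound the total-variation distance by $\PP(W_n\le k)$, and appeal to the standard limit theorem for transient/null-recurrent irreducible aperiodic chains (the paper cites the same reference~\cite{norris}). Your write-up is slightly more explicit about the cancellation on $\{W_n>k\}$ and about separating the transient and null-recurrent cases, but the argument is the same.
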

In these cases we denote the unique invariant 
distribution by $\mu^{(p)}$.  Thus $\mu^{(p)}$ is
uniquely defined for all $p\in[0,1]$.
\begin{proof}
We may couple the chain $X_n^{(\mu)}$ started in $\mu$
with the chain $X_n^{(\xi)}$ started in $\xi$ in the natural way, by
using the same peeling moves.
As noted above, the flag distances are the same in both
processes, we denote this common value by $W_n$.
Also, for all $n$ we have that $X_n^{(\mu)}(k)=X_n^{(\xi)}(k)$
for all $k\leq W_n$.  It follows from the coupling inequality
that
\begin{equation}
\|[X^{(\xi)}_n]_k-[\mu]_k\|_{\mathrm{TV}}
\leq \PP([X^{(\xi)}_n]_k\neq [X_n^{(\mu)}]_k)
\leq \PP(W_n<k).
\end{equation}
The event $\{W_n<k\}$ is precisely the same as the event that
$X^\ast_n$ belongs to the finite subset 
$\{\xi\in\hat\S^\ast:|\xi|<k\}$ of $\hat\S^\ast$.
By standard results for Markov chains, 
e.g.~\cite[Theorem~1.8.5]{norris},
it follows that $\PP(W_n<k)\to0$
as $n\to\oo$ in the transient and null-recurrent cases, 
proving~\eqref{tv_eq}.

Using Prohorov's theorem~\cite[Section~I.5]{billingsley} 
(or Lemma~\ref{l:dense} straight away)
we deduce from~\eqref{tv_eq} that $X^{(\xi)}_n$ 
converges weakly to $\mu$ for
all choices of $\xi$, thereby proving also
the final part of the statement.
\end{proof}

By Proposition~\ref{mp-unique_prop} (and by
definition, in the positive-recurrent case)
for all $p$ the measure $\mu^{(p)}$
can be obtained as the limiting distribution when starting
with initial condition $\xi\equiv\b$.
We let $\xi^{(p)}\in\hat\S$ denote
a random variable with distribution $\mu^{(p)}$.
The next result implies that the
stationary boundary condition $\xi^{(p)}$
is amenable whenever $p<1$:
\begin{lemma}\label{iid_lem}
For all $p$, the measure
$\mu^{(p)}$ is dominated by \iid($p$).
\end{lemma}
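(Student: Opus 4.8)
The plan is to prove by induction on $n$ that, when the chain starts from $X_0\equiv\b$, the law of $X_n$ is stochastically dominated by \iid($p$), and then to pass to the limit $n\to\oo$. The limiting step is routine: by Proposition~\ref{mp-unique_prop} (and by definition in the positive-recurrent case) $X_n$ converges weakly to $\mu^{(p)}$ when $X_0\equiv\b$, while the set of laws on $\S$ dominated by \iid($p$) is weakly closed, being the intersection of the half-spaces $\{\mu:\int g\,d\mu\le\int g\,d(\text{\iid}(p))\}$ over bounded increasing continuous $g$. So it suffices to prove the finite-$n$ statement.

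The base case $n=0$ holds because $X_0\equiv\b$ is the minimal element of $\hat\S$. For the inductive step assume the law of $X_n$ is dominated by \iid($p$); by Strassen's theorem we couple $X_n$ with $\eta\sim\text{\iid}(p)$ so that $X_n\le\eta$ coordinatewise, and perform one peeling step on $X_n$ (the peeling move and the colours of the newly exposed vertices being drawn independently of $\eta$) to get $X_{n+1}$. The structural observation driving the proof is this: $X_{n+1}$ consists of a block (possibly empty) of the newly exposed vertices, carrying independent \iid($p$) colours and sitting at the front of $X_{n+1}$ (it is at the front because $X_{n+1}(1)=r_{n+1}^L$ is the first black vertex met when sweeping leftward past the newly exposed vertices), followed by the entries of $X_n$ that survive, listed in their original relative order. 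An entry of $X_n$ fails to survive either because it is swallowed — an event determined by the peeling move alone, hence independent of all colours — or because it is swept over while locating $r_{n+1}^L$ and thereby absorbed into $C_0$, which happens only to a \emph{white} vertex, and at most one (the landing vertex of a left-swallow) is lost this way per step. The same dichotomy governs the newly exposed vertices: the ones \emph{not} retained are precisely those absorbed into $C_0$, hence white, and they form an initial segment of the block.

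Now define $\eta'$ by putting, on the positions occupied by retained new vertices, their fresh \iid($p$) colours (so $\eta'=X_{n+1}$ there), and on each surviving position the entry $\eta(i)$ at the position $i$ it came from. Then $X_{n+1}\le\eta'$ coordinatewise, using $X_n\le\eta$ on the surviving positions; by transitivity of stochastic domination it remains to show that the law of $\eta'$ is dominated by \iid($p$). To see this, observe that $\eta'$ is obtained by concatenating the block of fresh colours with $\eta$ in a colour-independent way (the place of insertion and the indices of the swallowed positions depend only on the peeling move), which produces an \iid($p$) sequence, and then deleting and reindexing: one deletes the swallowed positions (a colour-independent set, whose deletion leaves an \iid($p$) sequence), the absorbed initial segment of the block, and possibly the absorbed landing vertex. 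By the structural observation every one of the colour-dependent deletions removes a white entry — the absorbed block vertices are white, and the landing vertex, when absorbed, sits at a position where $X_n$, hence $\eta$, is white. So the law of $\eta'$ is dominated by \iid($p$) once we know the following: deleting white entries from an \iid($p$) sequence and reindexing yields a law dominated by \iid($p$).

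This last fact is the only delicate point, and the main obstacle, since the positions to be deleted may be chosen adaptively from the sequence itself. It suffices to handle one deletion at a time: if $\eta\sim\text{\iid}(p)$ and $Z$ is obtained by deleting coordinate $i$ on the event $\{\eta(i)=\w\}$ (and leaving $\eta$ unchanged otherwise) and reindexing, then for any increasing event $A$ depending on finitely many coordinates a direct computation gives $\P(Z\in A)-\P(\eta\in A)=p\,[\P(\rho\in A)-\P(\rho'\in A)]$, where $\rho\sim\text{\iid}(p)$ and $\rho'$ is an \iid($p$) sequence with its $i$-th coordinate conditioned to equal $\w$; since $\rho$ is stochastically dominated by $\rho'$ and $A$ is increasing, the right-hand side is $\le0$. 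Iterating this over the finitely many deletions that affect any fixed coordinate window, and then letting the window grow, gives the fact, and with it the inductive step and the lemma.
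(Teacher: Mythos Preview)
Your overall strategy—induct on $n$ starting from $X_0\equiv\b$, then pass to the weak limit—is the same as the paper's. The gap is in the inductive step, specifically in the claim that ``deleting white entries from an \iid($p$) sequence and reindexing yields a law dominated by \iid($p$)''. This is simply false in the generality you need. Take $\eta\sim\text{\iid}(p)$ and delete coordinate $1$ on the event $\{\eta(1)=\w,\eta(2)=\b\}$; a direct computation gives
\[
\P(\eta^D(2)=\w)=p+p^2(1-p)>p,
\]
so $\eta^D$ is not dominated by \iid($p$). Your one-step lemma (delete coordinate $i$ exactly on $\{\eta(i)=\w\}$) is correct, but it does not cover the deletions you actually perform: the absorption of the second exposed vertex is conditioned on the first being white, and—more seriously—the absorption of the landing vertex is decided by $X_n(k+1)=\w$, not by the corresponding coordinate of $\tilde\eta$. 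Since you obtained $\eta$ from an \emph{unspecified} Strassen coupling with $X_n$, nothing prevents $\{X_n(k+1)=\w\}$ from being correlated with $\eta(k+2),\eta(k+3),\ldots$ exactly as in the counterexample above, and then your $\eta'$ fails to be dominated by \iid($p$). (Concretely: if the coupling has $X_n(k+1)=\w$ iff $\eta(k+1)=\w$ and $\eta(k+2)=\b$, then in the step $\Ex_n=2$, $\Sw_n^-=k$ one computes $\P(\eta'(2)=\w)>p$.) Your iteration argument also has the secondary problem that after one deletion the sequence is no longer \iid($p$), so the lemma does not reapply.

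The paper avoids all of this by not re-invoking Strassen at each step. Instead it carries a single explicit coupling $(X_n,Y_n)$ through the induction, defining the update $Y_n\mapsto Y_{n+1}$ case by case on the peeling move so that $Y_{n+1}$ is \emph{exactly} \iid($p$) (not merely dominated by it) and $X_{n+1}\le Y_{n+1}$. The cases are: (i) $X_{n+1}=X_n$, take $Y_{n+1}=Y_n$; (ii) left-swallow and/or input of one $\b$, perform the same truncation on $Y_n$ and input a fresh \iid($p$) bit; (iii) $\Ex_n=3$ with input $\b\b$ or $\b\w$ to $X_n$, input two fresh \iid($p$) bits to $Y_n$. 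Because the number and identity of bits input to $Y$ in each branch are fresh and independent of $Y_n$, each branch keeps $Y_{n+1}$ exactly \iid($p$), and the monotone coupling is immediate. Your argument can be repaired along these lines, but not by the ``delete whites'' route as written.
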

\begin{proof}
It suffices to show that 
if $X_0\equiv\b$ then the law of $X_n$ is dominated
by \iid($p$) for all $n\geq0$.  More precisely,
we will show that one may define a process
$(Y_n)_{n\geq0}$ such that for all $n$,
(a) the distribution of $Y_n$ is \iid($p$) and
(b) $X_n\leq Y_n$.  
We show this by induction.

For $n=0$ this clearly
holds if we just sample $Y_0$ from \iid($p$).
Assume that we have such a coupling for the first
$n$ steps in the peeling process.  There are three main
cases to consider depending on the next peeling move.
In the first case, $X_{n+1}=X_n$ and we may take 
also $Y_{n+1}=Y_n$ (this happens e.g.\ if we swallow only to the
right and expose either no vertex or one white vertex).   
The second case is that  we swallow to
the left, and/or input one black vertex.
If we perform the corresponding truncation on $Y_n$,
and if necessary input independently a new bit
(white or black with probability $p$ or $1-p$),
this preserves properties (a) and (b).  The third possibility
is that we reveal 3 edges and thus input to $X_n$
either $\b\w$ or $\b\b$ (read from right to left), 
with relative probabilites 
$p$ and $1-p$.  Again, this may straightforwardly be
coupled with an input of two independent bits
into $Y_n$ so that property (b) is preserved.  This proves the result.
\end{proof}

In the next result we let 
$(V_n)_{n\geq0}$ denote an arbitrary 
sequence of i.i.d.\ random variables in
$\ZZ^k$ such that $V_n$ is independent of $X_n$ for all $n$,
and we let $V$ have the same distribution as the $V_n$ 
and be independent of $\xi^{(p)}$ and of $\xi^\ast$.
\begin{lemma}\label{ergodic_lem}
Let $F:\Sigma\times\ZZ^k\to\RR$ and
$F^\ast:\Sigma^\ast\times\ZZ^k\to\RR$ be bounded and
continuous functions.  
Consider the processes $(X_n)_{n\geq0}$
and $(X^\ast)_{n\geq0}$ started in the
invariant distributions $\mu^{(p)}$ and $\mu^\ast$,
respectively.
\begin{enumerate}
\item In the transient and null-recurrent cases,
\[
\frac{1}{n}\sum_{j=0}^{n-1} F(X_j,V_j)\to
\E[F(\xi^{(p)},V)]\quad \mbox{almost surely}.
\]
\item In the positive-recurrent case,
\[
\frac{1}{n}\sum_{j=0}^{n-1} F^\ast(X_j^\ast,V_j)\to
\E[F^\ast(\xi^\ast,V)]\quad \mbox{almost surely}.
\]
\end{enumerate}
\end{lemma}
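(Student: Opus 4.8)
The plan is to deduce both statements from the ergodic theorem for positive-recurrent Markov chains, applied not to $(X_n)$ or $(X_n^\ast)$ directly but to an enlarged chain that also records the i.i.d.\ randomness. First consider the positive-recurrent case, which is the cleaner of the two. The process $(X_n^\ast)_{n\geq0}$ is, as established earlier in Section~\ref{mixed_sec}, an irreducible aperiodic positive-recurrent Markov chain on the countable state space $\hat\S^\ast$, with unique invariant distribution $\mu^\ast$. Form the pair process $Z_n=(X_n^\ast,V_n)$: since the $V_n$ are i.i.d.\ and $V_n$ is independent of $X_n^\ast$ (hence of the $\sigma$-algebra generated by the peeling moves up to time $n$, on which $X_n^\ast$ depends), the sequence $(Z_n)_{n\geq0}$ is again a Markov chain, on the countable-times-$\ZZ^k$ state space, and it is irreducible, aperiodic and positive-recurrent with invariant law $\mu^\ast\otimes\mathrm{Law}(V)$. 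Applying the ergodic theorem for positive-recurrent Markov chains (e.g.~\cite[Theorem~1.10.2]{norris}) to the bounded function $(x^\ast,v)\mapsto F^\ast(x^\ast,v)$ gives
\[
\frac1n\sum_{j=0}^{n-1}F^\ast(X_j^\ast,V_j)\to
\sum_{x^\ast}\mu^\ast(x^\ast)\,\E[F^\ast(x^\ast,V)]
=\E[F^\ast(\xi^\ast,V)]
\]
almost surely, where we used independence of $\xi^\ast$ and $V$ in the last equality; boundedness of $F^\ast$ makes the interchange of sum and expectation legitimate. This is exactly statement~(2).

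For statement~(1), in the transient and null-recurrent cases there is no ergodic theorem for $(X_n)$ itself (it lives on the uncountable $\hat\S$ and is not positive-recurrent there), so one must route the argument through the flag. The key observation is that although $(X_n)$ is complicated, what $F$ sees is essentially controlled by finitely many coordinates, and those coordinates $X_n(1),\dots,X_n(W_n)$ are captured by the \emph{finite} string $X_n^\ast\in\hat\S^\ast$, whose dynamics depend only on itself and the peeling moves and which is positive-recurrent as a chain on $\hat\S^\ast$ in \emph{all} three regimes --- wait, this is precisely the subtlety: $(X_n^\ast)$ is positive-recurrent only in the positive-recurrent case. In the transient and null-recurrent cases the flag distance $W_n$ does not return to $1$ with the right frequency, so $X_n^\ast$ is not positive-recurrent. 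The correct substitute is to use the coupling from the proof of Proposition~\ref{mp-unique_prop}: run $(X_n)$ from $\mu^{(p)}$ (which makes it strictly stationary), and use that $\mu^{(p)}$ is the unique invariant measure so the stationary process is \emph{ergodic}. Concretely, the plan is to show that the stationary sequence $((X_n,V_n))_{n\geq0}$ is ergodic, and then Birkhoff's ergodic theorem applied to the function $(x,v)\mapsto F(x,v)$ gives the claimed almost-sure convergence to $\E[F(\xi^{(p)},V)]$.

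The main obstacle, then, is establishing ergodicity of the stationary process $((X_n,V_n))_{n\geq0}$ in the transient/null-recurrent case. The natural route: any shift-invariant set for this process corresponds to a bounded harmonic function for the Markov chain $(X_n,V_n)$, equivalently (since the $V_n$ are i.i.d.\ and enter as an independent input) a bounded space-time harmonic function for $(X_n)$; by the tail-triviality-type argument implicit in Proposition~\ref{mp-unique_prop} --- namely that from \emph{any} starting state $X_n$ converges in total variation on every initial segment to $\mu^{(p)}$ (equation~\eqref{tv_eq}) --- one gets that the invariant $\sigma$-algebra is trivial. The cleanest packaging is: (i) uniqueness of the invariant measure plus the Feller property and the convergence~\eqref{tv_eq} imply that the stationary chain $(X_n)$ is \emph{mixing}, hence ergodic; (ii) tensoring with the i.i.d.\ sequence $(V_n)$ preserves ergodicity (a product of an ergodic stationary process with an i.i.d.\ sequence is ergodic, since the i.i.d.\ factor is mixing and a product with a mixing system is ergodic); (iii) apply Birkhoff to $F$, using boundedness to identify the limit as the space average $\E[F(\xi^{(p)},V)]$. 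One should double-check that $F$ being a function of the full (infinite) configuration $x\in\Sigma$ rather than finitely many coordinates causes no trouble --- it does not, since Birkhoff's theorem needs only $F(\xi^{(p)},V)\in L^1$, which holds as $F$ is bounded, and continuity of $F$ is not even needed at this stage (it is presumably used elsewhere to pass to the relevant functionals). I would write up (i)--(iii) as the substance of the transient/null-recurrent case and the short Markov-chain ergodic theorem computation as the positive-recurrent case.
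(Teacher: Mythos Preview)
The paper's own proof handles both cases uniformly in two sentences: in each case the pair process $((X_n,V_n))_{n\geq0}$ (respectively $((X_n^\ast,V_n))_{n\geq0}$) is a Markov process started in its \emph{unique} invariant distribution, and the conclusion follows from the general ergodic theorem for Markov processes (the paper cites \cite[Corollary~5.12]{hairer}).  Your treatment of case~(2) is exactly this argument, spelled out via countable-state Markov chain theory, and is fine.

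For case~(1), your route through steps (i)--(iii) is more circuitous than needed, and step~(ii) contains a genuine gap.  You invoke the fact that the product of an ergodic stationary system with a mixing (here i.i.d.) system is ergodic, but the joint law of $((X_n)_{n\geq0},(V_n)_{n\geq0})$ is \emph{not} a product: although $V_n$ is independent of $X_n$ for each fixed $n$, the two processes are not independent of each other --- in the intended application $V_n$ carries the peeling data at step~$n$ and therefore (partially) determines $X_{n+1}$.  So the ``ergodic $\times$ mixing $\Rightarrow$ ergodic'' lemma does not apply.

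The clean fix is precisely what you did for case~(2) and what the paper does: regard $(X_n,V_n)$ directly as a Markov process on $\hat\Sigma\times\ZZ^k$, observe that $\mu^{(p)}\otimes\mathrm{Law}(V)$ is invariant (this uses $V_n\perp X_n$), and check that it is the \emph{unique} invariant measure.  Uniqueness follows from Proposition~\ref{mp-unique_prop}: any invariant law for the pair has $X$-marginal invariant for $(X_n)$, hence equal to $\mu^{(p)}$; the structure of the transition kernel (the $V$-coordinate is refreshed independently at every step) then forces the product form.  Uniqueness of the invariant measure already gives ergodicity of the stationary process, so your step~(i) establishing \emph{mixing} of $(X_n)$ alone is unnecessary, and Birkhoff applied to the bounded $F$ finishes the argument.
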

\begin{proof}
In either case, the process
$((X_n,V_n))_{n\geq0}$ or 
$((X^\ast_n,V_n))_{n\geq0}$ is a Markov process started
in its  unique invariant distribution.
Hence
 the result follows from a standard 
ergodic theorem for Markov processes, see 
e.g.~\cite[Corollary~5.12]{hairer}.
\end{proof}

\section{The critical probability}
\label{pc_sec}

Consider the peeling process started with the stationary version
$\xi^{(p)}$ of the boundary.  By Lemma~\ref{iid_lem}, this boundary
condition is amenable, and hence by Proposition~\ref{amenable_prop}
the critical probability is equal to $p_\crit=p_\crit^\b$;  that
is, the critical probability is the same as if we had started from an
all-black boundary.  We will now use this together with 
Proposition~\ref{pc_prop} and Lemma~\ref{ergodic_lem}
to relate $p_\crit$ to the function $\a(p)$ 
in Theorem~\ref{main_thm}.

\subsection{Proof of Theorem~\ref{main_thm}}
\label{pf_sec}

 Let $(\zeta_n)_{n\geq 1}$ 
denote a sequence of i.i.d.~random variables
(independent of everything else) satisfying 
\be
\P(\zeta_n=0) = 1-p,\;
\P(\zeta_n = 1) = p(1-p),
\mbox{ and } \P(\zeta_n = 2)=p^2. 
\ee
We identify $\zeta_n$ with the number of consecutive 
new white vertices from right
 to left (starting from the rightmost) on the revealed face $f_n$
 when $\Ex_n = 3$ (see Fig.~\ref{f:peelperc}). 
Let $(\chi_n)_{n\geq1}$ be a sequence of
 i.i.d.~random variables (independent of everything else)
satisfying $\P(\chi_n = 0) = 1-p$ 
and $\P(\chi_n = 1)=p$. 
We identify $\chi_n$ with
the number of new white vertices
on the revealed face $f_n$ when $\Ex_n=2$. 

Let $(\hat S_n)_{n\geq0}\in\ZZ$
be the process given by $\hat S_0=1$ and
 \begin {align}\nonumber
  \hat S_{n+1} = &\hat S_{n} + \indic{\Ex_n = 3} \zeta_n + 
\indic{\Ex_n =2}\Big(\chi_n+
  \chi_n\sum_{k=1}^\infty\indic{X_{n}(k+1) = \circ;\Sw_n^- = k}
  -\Sw_n^+\Big) \\ 
&+\indic{\Ex_n = 1}\Big(
  \sum_{k=1}^\infty \indic{X_{n}(k+1) = \circ;\Sw_n^- =  k}
  -\Sw_n^+\Big). \label{sprocess}
   \end {align}
Letting $\tau$ denote the minimal $n$ for which 
$\hat S_n\leq0$, we have that 
$S_n=\hat S_{n\wedge\tau}\vee0$.  That is, $S_n$ is obtained
by running $\hat S_n$ until it hits $\{\dotsc,-2,-1,0\}$
and then freezing it at 0.

Let $V_n=(\Ex_n,\Sw^-_n,\zeta_n,\chi_n)\in\ZZ^4$.
Note that $(V_n:n\geq0)$ is an i.i.d.\ sequence, and that $V_n$ is
independent of $X_n$ for each $n\geq0$.
We can write
\begin {equation}
 \hat{S}_{n+1} = \hat{S}_{n} +
 F\big(X_n,V_n\big) - \Sw^+_n
\end {equation}
where 
\[
F(X_n,V_n)=\indic{\Ex_n=3}\zeta_n+\indic{\Ex_n=2}\chi_n+
(\indic{\Ex_n=1}+\indic{\Ex_n=2}\chi_n)
\sum_{k=1}^\infty\indic{X_{n}(k+1) = \circ;\Sw_n^- = k}.
\] 
Thus
\begin {equation}\label{Sn_avg}
 n^{-1} \hat{S}_n = n^{-1}+ n^{-1} \sum_{i=0}^{n-1}
 F\big(X_i,V_i\big)
-n^{-1}\sum_{i=0}^{n-1} \Sw^+_i.
\end {equation}
Note that $F(X_n,V_n)\leq4$ is bounded.  
Also note that, in the positive-recurrent case, we may equivalently
write $F(X_n,V_n)=F^\ast(X^\ast_n,V_n)$ where 
\[
F^\ast(X^\ast_n,V_n)=\indic{\Ex_n=3}\zeta_n+\indic{\Ex_n=2}\chi_n+
(\indic{\Ex_n=1}+\indic{\Ex_n=2}\chi_n)
\sum_{k=1}^{W_n}\indic{X^\ast_{n}(k+1) = \circ;\Sw_n^- = k}.
\] 
Thus, applying Lemma~\ref{ergodic_lem} as well as the strong law of
large numbers to~\eqref{Sn_avg}, we deduce that
for all $p$
\begin{equation}\label{alpha_limit}
n^{-1}\hat{S}_n \asto \E[F(\xi^{(p)},V)]-\E(\Sw^+)
=\a(p).
\end{equation}

From~\eqref{alpha_limit} we see that
if $\a(p)<0$ then $\hat S_n\asto-\oo$, meaning that 
$S_n\asto0$.  Using Proposition~\ref{pc_prop}
it follows that 
\[
p_\crit\geq\sup\{p\in[0,1]:\a(p)<0\}.
\]
On the other hand, if $\a(p)>0$ then $\hat S_n\asto\oo$.
We claim that this implies that 
$\PP(S_n>0\mbox{ for all }n)>0$ 
and hence using
Proposition~\ref{pc_prop} again that
\[
p_\crit\leq\inf\{p\in[0,1]:\a(p)>0\}.
\]
To see the claim, first note that there is some $N$ such that
\[
\PP(\hat S_n>0\mbox{ for all }n\geq N)>0.
\]
Also recall that the process $((X_n,\hat S_n))_{n\geq0}$
is a Markov chain.
Fix a sample of $X_0$ and a sequence of peeling moves
$\pi=(\pi_0,\pi_1,\dotsc,\pi_{N-1},\pi_N,\dotsc)$
such that $\hat S_n>0$ for all $n\geq N$
(the $\pi_j$ encode which face is discovered and what the colours of
the new vertices are).  
We show that there are peeling
moves $\pi_0',\pi_1',\dotsc,\pi_{N-1}'$ such that if we instead
perform the sequence 
$\pi'=(\pi_0',\pi_1',\dotsc,\pi_{N-1}',\pi_N,\pi_{N+1},\dotsc)$
then (i) for all $n\leq N$,  the $X_n$ are the same as if we 
had performed the sequence $\pi$, (ii) 
$\hat S_{n+1}-\hat S_n\geq 0$ for all $n\leq N-1$, and (iii) $\hat
S_N$ is at least as large as if we had performed the  
sequence $\pi$.  Moreover, the
$\pi_n'$ can be chosen so that all the ratios $\PP(\pi_n')/\PP(\pi_n)$
are uniformly bounded from below by a positive number, for all choices
of the $\pi_n$.  Once we show that there are such $\pi'_n$ the claim
follows.  

We describe how to choose $\pi'_n$ given $\pi_n$ in a case-by-case
manner.  If $\Sw_n^+=0$ then we just take $\pi_n'=\pi_n$.  Assume in
what follows that $\Sw_n^+>0$, and to start with 
also that $\Sw_n^-=0$.  If $\pi_n$ exposes no vertex, or exposes exactly
one vertex which is white, then $\pi_n$ does not change $X_n$.
In this case let $\pi'_n$ be given by exposing two
vertices, both white.  Then $\pi'_n$ also does not change $X_n$, and  
$\PP(\pi'_n)/\PP(\pi_n)\geq\PP(\pi'_n)=\tfrac38 p^2$.
The next case is that $\pi_n$ exposes exactly one vertex which is
black, so that $X_{n+1}$ is obtained by inputting one black vertex to
the front of $X_n$.  We then let $\pi'_n$ be given by exposing two
vertices, the first white and the second black (counting
counter-clockwise).  Now
$\PP(\pi'_n)/\PP(\pi_n)\geq\PP(\pi'_n)=\tfrac38 p(1-p)$.
The final case is when $\pi_n$ is given by swallowing $k$ edges to the
left and $\ell$ edges to the right.  Then let $\pi'_n$ be given by
swallowing $k$ vertices to the left only, and exposing one white
vertex.  In this case we have, using~\eqref{qs_eq},
\be
\frac{\PP(\pi'_n)}{\PP(\pi_n)}=\frac{p q_k}{q_{k,\ell}}=
\frac{p q_k}{\tfrac83 q_k q_\ell}\geq \tfrac38 p.
\ee
This proves the claim and hence the theorem.
\qed

\subsection{Upper and lower bounds}

Although we are unable to explicitly find the
probabilites $\P(\xi^{(p)}(k)=\w)$, 
and hence the function $\a(p)$,  we
can find upper and lower bounds. 
As a warmup, we prove Proposition~\ref{bounds_prop}.
Trivially
$\beta(p)\geq0$ for all $p$, and this already gives
$p_\crit \leq\frac{\sqrt{73}-5}{6}$.
Moreover, from Lemma~\ref{iid_lem} we have that 
$\PP(\xi^{(p)}(k)=\w)\leq p$
for all $k$ (since the event $\{\xi(k)=\w\}$ is increasing).  
From this and the fact~\eqref{q-sums} 
that $\sum_{k\geq1,\mathrm{odd}}q_k=\tfrac{1}{8}$ and
$\sum_{k\geq1,\mathrm{even}}q'_k=\tfrac{1}{18}$
we deduce that
$p_\crit\geq \frac{\sqrt{493}-13}{18}$.
This proves Proposition~\ref{bounds_prop}.

We will now define a Markov chain with finite state space that will
allow us to improve these bounds.  To do this, it helps to first
recall the process 
$X^\ast_n=(X^\ast_n(k):1\leq k\leq W_n)\in\hat\S^\ast$, in particular
the fact that $X^\ast_n(k)=X_n(k)$ for all $k\leq W_n$.
Thus the `flag' $W_n$ keeps track of where in $X_n$ we can find
$X^\ast_n$.  For each $K\geq1$, we will define a process 
\[
X^K_n=(X^K_n(k):1\leq k\leq W^K_n)\in\hat\S^K.
\]
Here $\hat\S^K$ is the set of sequences in 
$\hat\S^\ast$ of length at most
$K$ and $W^K_n$ is the length of $X^K_n$.   We start with
$X^K_0=\b$, and thus $W^K_0=1$.
Supposing we have defined $X_n^K$ and $X_n$
for some $n\geq0$, we look at the next
peeling move of $X_n$.  If we `swallow beyond $X^K_n$', that
is $\Sw^-\geq W^K_n-1$, set $X^K_{n+1}=\b$.  Otherwise we first apply
the usual rules to $X^K_n$, and then (if necessary) truncate at $K$
to obtain $X^K_{n+1}$ which satisfies $W^K_{n+1}\leq K$. 
Since $X^K_{n+1}$ depends on $X^K_n$ and the independent randomness in
the next peeling move of $X_n$, it follows that $(X^K_n)_{n\geq0}$
is a Markov chain.  Moreover, we have coupled $X^K$ with $X$ and
$X^\ast$ in such a way that  for all $n$,
\begin{equation}\label{XK_eq}
W^K_n\leq W_n\wedge K,\mbox{ and }
X^K_n(k)=X^\ast_n(k)=X_n(k)\mbox{ for all }k\leq W^K_n.
\end{equation}
It is not hard to see that $X^K$ is an aperiodic and irreducible
Markov chain, and thus has a unique asymptotic distribution $\mu^K$.
The transition probabilities for this chain may be written down
explicitly, and hence also (at least in principle) the
measure $\mu^K$.

Define $X^{K,\b}_n$ by concatenating $X^K_n$ with an infinite sequence of
$\b$'s.  
If we start $X$ in the all-black state, then~\eqref{XK_eq}
implies that $X_n\geq X^{K,\b}_n$ for all $n$.  
Moreover, the distribution of $X^{K,\b}_n$
converges weakly, as $n\to \infty$, to a measure 
$\mu^{K,\b}$ which may be obtained from
$\mu^K$ in a straightforward way, and which satisfies
$\mu^{K,\b}\leq\mu^{(p)}$.  Thus we may use $\mu^{K,\b}$ to obtain a lower
bound on $\a(p)$ and hence an upper bound on $p_\crit$.
We may similarly define a process $X_n^{K,\w}$ by concatenating $X^K_n$ with
an infinite sequence of $\w$'s, and thus obtain a measure $\mu^{K,\w}$
satisfying $\mu^{(p)}\leq\mu^{K,\w}$.  However, there is another, better,
way to obtain an upper bound on $\mu^{(p)}$, as follows.  

Recall from Lemma~\ref{iid_lem} that we coupled $X$ to a chain $Y$
such that for each $n$ the distribution of $Y_n$ is {\iid}($p$),
and $X_n\leq Y_n$.  
Define $X^{K,\iid(p)}_n(k)$ to be
$X_n(k)=X^K_n(k)$ if $k\leq W^K_n$, or $Y_n(k)$ otherwise.  Thus we
have for all $n$ that $X^{K,\iid(p)}_n\geq X_n$.  
Moreover, the distribution of $X^{K,\iid(p)}_n$
converges weakly, as $n\to \infty$, to a measure 
$\mu^{K,\iid(p)}$ which may be obtained by
first sampling $\xi^K\in\hat\S^K$ from $\mu^K$ and then appending to
it an infinite {\iid}($p$) sequence
(independent of $\xi^K$).  It follows that $\mu^{K,\iid(p)}$ 
stochastically dominates $\mu^{(p)}$.

As an example, taking $K=2$ the relevant states of $X^{2}$ 
are $\b$, $\b\b$ and $\w\b$ (each state read from right to left).
Using~\eqref{q-sums}--\eqref{1k} we find that
the transition probabilities are:
\begin{equation}
\begin{array}{l| r}
\mathrm{Transition} & \mathrm{Probability} \\
\hline 
\b \to \b\b &  \frac{1}{2}(1-p)\\ [2pt]
\b \to \w \b & \frac{3}{8} p (1-p)\\ [2pt]
\b\b \to \b & \frac{1}{9}(1+p)\\ [2pt]
\b\b \to \w \b & \frac{3}{8} p (1-p) \\  [2pt]
\w\b \to \b & \frac{1}{9}(1+p)\\ [2pt]
\w\b \to \b\b & \frac{1}{2}(1-p)\\
\end{array}
\end{equation}
and hence 
\begin {align}
 \mu^2(\b) &= \frac{8(1+p)}{-27 p^2 - p + 44}\\
 \mu^2(\w\b) &= \frac{27p(1-p)}{-27 p^2 - p + 44}.
\end {align}
Using that $\mu^{2,\b}(\w \b) = \mu^2(\w \b)$ 
and $\mu^{2,\iid(p)}(\w \b)= p\mu^2(\b) + \mu^2(\w \b)$
as well as~\eqref{alpha-eq} and~\eqref{q-sums}
one finds that a lower bound on $p_\crit$ 
is given by the unique solution in $[0,1]$ to
\begin{equation}
189 p^4+378 p^3-596 p^2-575 p + 396 = 0
\end{equation}
and an upper bound is given by the unique solution in $[0,1]$ to
\begin{equation}
81 p^4+162 p^3-251 p^2-232 p + 176 = 0.
\end{equation}
The result is 
$0.523599 \leq p_\crit\leq 0.572542$ when rounding to 
six digits.

One may similarly write down the transition probabilities
for $X^K$ for general $K$, but
as $K$ becomes larger it quickly becomes infeasible to write down the
limiting distributions $\mu^{K,\b}$ and
$\mu^{K,\iid(p)}$ by hand.  We 
provide in 
Table~\ref{bounds_tab} some upper and lower bound 
which we numerically computed
using this method, and Fig.~\ref{f:plot} shows a plot
of these bounds for $K$ in the range 2 to 17.  
We note that the gap between the bounds we obtain is (weakly)
decreasing in $K$ due to stochastic monotonicity of the measures
$\mu^{K,\b}$  and $\mu^{K,\iid(p)}$ in $K$. 

\begin{figure}
\begin{floatrow}
\capbtabbox{%
  \begin{tabular}{l|cc}
$K$ & Lower bound & Upper bound \\
\hline 
4  & 0.5382 & 0.5656 \\
6  & 0.5436 & 0.5625 \\
8  & 0.5464 & 0.5609 \\
10 & 0.5482 & 0.5598 \\
12 & 0.5493 & 0.5591 \\
14 & 0.5502 & 0.5586 \\
16 & 0.5508 & 0.5583 \\
17 & 0.5511 & 0.5581 \\
\end {tabular}
}{\captionsetup{width=5cm}
  \caption{Upper and lower bounds on $p_\crit$, obtained by 
numerically finding the limiting distributions for the 
processes $X^{K,\iid(p)}_n$ and $X^{K,\b}$.}\label{bounds_tab}
}
\ffigbox{%
  \centerline{\scalebox{0.38}{\includegraphics{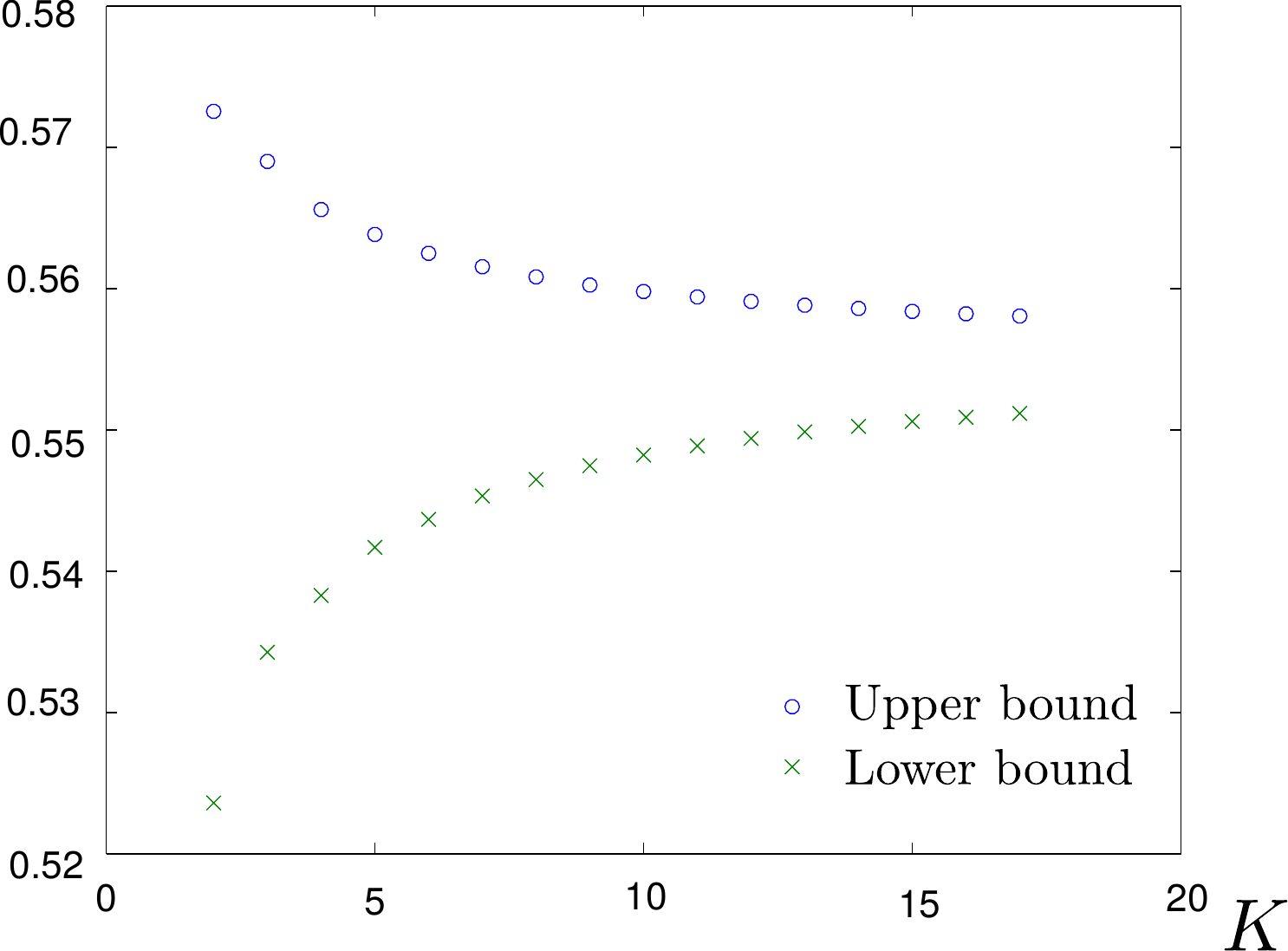}}}

}{\captionsetup{width=5cm}
  \caption{A plot of upper bounds and lower bounds on $p_c$ as a function of $K$.} \label{f:plot}
}
\end{floatrow}
\end{figure}

\section{Outlook}

Certain questions are left unanswered by this work.
It may be possible
to find the exact distribution of $\xi^{(p)}$, and thereby
recover Richier's result that $p_\crit=\tfrac59$
(see Remark~\ref{richier-rk})
but this would require a new idea.  We also do not provide much
information about general properties of the function $\a(p)$, for
example we have not showed that it has a \emph{unique} root in
$[0,1]$, which seems natural to suppose.  A related question is
whether the processes $(X_n)_{n\geq0}$ and $(S_n)_{n\geq0}$ are
stochastically monotonic in $p$?  This also does not seem easy to
establish.  

On the other hand, the methods we have presented should (at least in
principle) not be hard to 
extend to site percolation on other half-planar maps with the
domain Markov property.  Angel and Ray showed 
in~\cite{angel-ray} that
for each $k\geq3$ there is a one-parameter family of 
translation-invariant, domain Markov probability measures
supported on half-planar
$k$-angulations with only simple faces.
Extending the methods to the full class of such 
\emph{quadrangulations} appears straightforward:
one need only adjust the values of the $q$:s.  The methods should in
principle also extend to $k\geq5$.  Then the notion of admissible boundary
conditions would need to be modified to allow for segments of
consecutive white vertices of length up to $k-3$, and the formulas
would become considerably more complicated, but there does not seem to
be any fundamental problem.

\subsection*{Acknowlegdement}

This work was started while both authors were at Uppsala
University in Sweden.  We have benefited from discussions 
with Svante Janson, Takis Konstantopoulos, Pierre Nolin
and Hermann Thorisson.


\begin {thebibliography}{99}
\bibitem{angel:2003}
O. Angel, {\it Growth and percolation on the uniform infinite 
planar triangulation.} GAFA 13(5): 935--974,  2003.

\bibitem{angel:2005}
O. Angel, {\it Scaling of percolation on infinite planar maps, I.} 
arXiv preprint math/0501006 (2005).

 \bibitem{angel:2013} O.~Angel and N.~Curien, {\it Percolations on
   random maps I: half-plane models.}  arXiv:1301.5311.

\bibitem{angel:2014}
O. Angel, A. Nachmias, and G. Ray, {\it Random walks on stochastic 
hyperbolic half planar triangulations.} arXiv:1408.4196.

\bibitem{angel-ray}
O. Angel, G. Ray. {\it Classification of half planar maps.} 
arXiv:1303.6582.

\bibitem{bbg1}
G. Borot, J. Bouttier, and E. Guitter, 
{\em A recursive approach to the $O(n)$ model on 
random maps via nested loops.} 
J. Physics A: Math.  Theor. 45(4): 045002, 2012

\bibitem{bbg2}
G. Borot, J. Bouttier, and E. Guitter, 
{\em More on the $O(n)$ model on random maps via 
nested loops: loops with bending energy.}
J. Physics A: Math.  Theor. 45(27): 275206, 2012

\bibitem{bg:2009}
J. Bouttier and  E. Guitter, 
{\it Distance statistics in quadrangulations with a boundary, 
or with a self-avoiding loop.} 
Journal of Physics A: 
Mathematical and Theoretical 42(46): 465208, 2009.

\bibitem{benjamini-curien}
I. Benjamini and N. Curien {\it Simple random walk on the uniform 
infinite planar quadrangulation: Subdiffusivity via pioneer points.} 
GAFA 23(2): 501--531, 2013.

\bibitem{benjamini-schramm}
I. Benjamini  and O. Schramm, {\it Recurrence of distributional limits 
of finite planar graphs.} 
Selected Works of Oded Schramm. Springer New York, 2011. 533--545.

\bibitem{bjo-stef}
J. E. Bj{\" o}rnberg and S. {\" O}. Stef{\' a}nsson,
{\it Recurrence of bipartite planar maps.}
Electronic Journal of Probability 19(31): 1--40, 2014.

\bibitem{billingsley}
P.~Billingsley, {\em Weak convergence}.  John Wiley {\&} Sons,  2009.

\bibitem{cur-legall}
N. Curien and J.-F. Le Gall,
{\it Scaling limits for the peeling process on random maps}.
arXiv:1412:5509.

\bibitem{curien-miermont}
N. Curien and G. Miermont, 
{\it Uniform infinite planar quadrangulations with a boundary.} 
Random Struct. Alg.. doi: 10.1002/rsa.20531, 2014.

\bibitem{grimmett}
G. Grimmett, {\em Percolation}.
Springer, 1999.

\bibitem{ggn}
O. Gurel-Gurevich and A. Nachmias, {\it Recurrence of planar graph limits.}	
Annals of Mathematics, 177(2):  761--781, 2013.

\bibitem{hairer} M.~Hairer, {\em Ergodic properties of Markov
  processes.}  Lecture notes at http://www.hairer.org/notes/Markov.pdf

\bibitem{kesten}
H. Kesten, {\em Percolation theory for mathematicians}.
Birkh{\"a}user, 1982.

\bibitem{menard-nolin}
L. M{\' e}nard and P. Nolin, {\it Percolation on uniform infinite 
planar maps.} arXiv:1302.2851.

\bibitem{norris}
J.~R.~Norris, {\em Markov Chains}.  Cambridge University Press, 1998.

\bibitem{ray:2013}
G. Ray, {\it Geometry and percolation on half planar triangulations.} 
arXiv:1312.3055.

\bibitem{richier}
L. Richier, 
{\it Universal aspects of critical percolation on random half-planar maps}.
arXiv:1412.7696

\end {thebibliography}
\end{document}